\documentclass{article}

\usepackage{amsmath, amsthm, amsfonts, amssymb, enumerate}

\newtheoremstyle{problemstyle}  
        {12pt}
        {}    
        {}    
        {}    
        {\bfseries} 
        {\normalfont\bfseries.} 
        {.5em}
        {}    
        
\theoremstyle{problemstyle}

\newtheorem{problem}{Problem}[section]
\newtheorem{definition}[problem]{Definition}
\newtheorem{lemma}[problem]{Lemma}
\newtheorem{proposition}[problem]{Proposition}
\newtheorem{theorem}[problem]{Theorem}
\newtheorem{remark}[problem]{Remark}

\newtheorem{fact}[problem]{Fact}

\DeclareMathOperator{\defin}{def} 
\DeclareMathOperator{\tp}{tp} 
\DeclareMathOperator{\aut}{Aut} 
\DeclareMathOperator{\HH}{H} 
\DeclareMathOperator{\ZZ}{Z} 
\DeclareMathOperator{\PP}{P} 
\DeclareMathOperator{\acl}{acl} 
\DeclareMathOperator{\dcl}{dcl} 

\newcommand{\xx}{\bar{x}}
\newcommand{\yy}{\bar{y}}

\newcommand{\aaa}{\bar{a}}
\newcommand{\bb}{\bar{b}}

\newcommand{\dd}{\bar{d}}
\newcommand{\ee}{\bar{e}}

\title{The short exact sequence in definable Galois cohomology}

\author{David Meretzky}

\date{\today}

\begin{document}

\maketitle

\begin{abstract}
    \noindent In \cite{pillay_1997}, Pillay introduced definable Galois cohomology, a model-theoretic generalization of Galois cohomology. Let $M$ be an atomic and strongly $\omega$-homogeneous structure over a set of parameters $A$. Let $B$ be a normal extension of $A$ in $M$. We show that a short exact sequence of automorphism groups $1 \to \aut(M/B) \to \aut(M/A) \to \aut(B/A) \to 1$  induces a short exact sequence in definable Galois cohomology. We also discuss compatibilities with \cite{pillay2024automorphism}. Our result complements the long exact sequence in definable Galois cohomology developed in \cite{OmarAnandDavid}.
\end{abstract}

\section{Introduction and preliminaries}

In \textit{Remarks on Galois cohomology and definability} \cite{pillay_1997}, Anand Pillay introduced definable Galois cohomology, a model-theoretic generalization of the first nonabelian Galois cohomology set adapted for suitable first order theories $T$. When $T$ is specialized to the theory $DCF_0$, definable Galois cohomology coincides with constrained differential Galois cohomology as defined in Chapter 7 of Kolchin's \textit{Differential Algebraic Groups} \cite{kolchin1985differential}. When specialized to the theory $ACF_0$, definable Galois cohomology coincides with usual algebraic non-abelian Galois cohomology as defined in Chapter 1 Section 5 of Serre's \textit{Galois Cohomology} \cite{serre1979galois}. We begin by recalling the setting and main result of \cite{pillay_1997} before stating our results. We will continue to work in this setting throughout this paper.

Let $T$ be a complete theory eliminating imaginaries (or consider $T^{eq}$). Let $M$ be a model of $T$ which is atomic and strongly $\omega$-homogeneous over a set of parameters $A$. That is, the type of any finite tuple from $M$ over $A$ is isolated and for any two finite tuples from $M$ with the same type over $A$, there is an automorphism of $M$ taking one tuple to the other. 

Recall from \cite{pillay_1997} that an ($A$-definable) right principal homogeneous space (PHS) for an $A$-definable group $G$ is a nonempty $A$-definable set $X$ together with a right $A$-definable action $\rho:X \times G \to X$ such that $\forall x,y \in X$ there exists a unique $z \in G$ such that $\rho(x,z) = y$. A pointed PHS for $G$ is a pair $(X,p)$ consisting of a PHS for $G$ with a specified $M$-point $p$ of $X$. 

An $A$-definable bijection between two PHSs for $G$ which commutes with the actions of $G$ is an isomorphism of PHSs. The set of isomorphism classes of PHSs for $G$ has the structure of a pointed set with basepoint being the isomorphism class containing $G$ itself considered as a PHS. Altering slightly the notation from \cite{pillay_1997}, we will denote this pointed set $\PP_{\defin}(M/A,G(M))$. 

Similarly, an $A$-definable bijection of two pointed PHSs for $G$ commuting with the actions of $G$ and which sends the basepoint of one to the basepoint of the other is an isomorphism of pointed PHSs. The set of $A$-isomorphism classes of pointed PHSs for $G$ also has the structure of a pointed set. The basepoint is the isomorphism class containing the pointed PHS $(G,e)$ where $e$ is the identity of $G$. We will denote this pointed set $\PP_{\defin,*}(M/A,G(M))$.

A (right) definable cocycle is a function $\varphi$ from $\aut(M/A)$ to $G(M)$, the $M$-points of $G$, such that  firstly, for any $\sigma, \tau \in \aut(M/A)$, $$\varphi(\sigma\tau) = \varphi(\sigma)\sigma(\varphi(\tau))$$ and secondly, there exists an $A$-definable function $h(\bar{x},\bar{y})$ and tuple $\bar{a}$ from $M$ such that $\forall \sigma \in \aut(M/A)$, $$\varphi(\sigma) = h(\bar{a},\sigma(\bar{a})).$$ The set of cocycles from $\aut(M/A)$ to $G(M)$ is denoted $\ZZ^1_{\defin}(M/A,G(M))$ and has the structure of a pointed set. The cocycle which takes constant value at the identity $e \in G(M)$ is the basepoint. 

Two definable cocycles $\varphi$ and $\psi$ are said to be cohomologous if there is an element $b \in G(M)$ such that  $$\varphi(\sigma) = b^{-1}\psi(\sigma)\sigma(b)$$ for all $\sigma \in \aut(M/A)$. Cohomology is an equivalence relation on the pointed set of cocycles, $\ZZ^1_{\defin}(M/A,G(M))$, and a cocycle is called trivial if it is cohomologous to the constant cocycle. Taking the quotient of $\ZZ^1_{\defin}(M/A,G(M))$ by the cohomology equivalence relation yields another pointed set, the pointed set of cohomology classes of cocycles, which is denoted $\HH^1_{\defin}(B/A,G(B))$ and which has the class of trivial cocycles as its basepoint.\\ 

The main result of \cite{pillay_1997} is the following: 

\begin{proposition}(Proposition 3.3. of \cite{pillay_1997})\label{PHS_cocycle_correspondence_aut(M/A)}
    There is a natural isomorphism (basepoint preserving bijection, natural with respect to morphisms of definable groups in the second coordinate), $$\PP_{\defin}(M/A,G(M)) \cong \HH^1_{\defin}(M/A,G(M)),$$ between the pointed set of isomorphism classes of $A$-definable right principal homogeneous spaces for $G$ and $\HH^1_{\defin}(M/A,G(M))$, the pointed set of equivalence classes of definable cocycles from $\aut(M/A)$ to $G(M)$ modulo the cohomology relation.
\end{proposition}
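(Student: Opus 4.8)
The plan is to construct mutually inverse maps between the two pointed sets and verify they respect the basepoints and the naturality. Given a PHS $X$ for $G$, I would first exploit the fact that $X$ is nonempty and $A$-definable, so it has a point in $M$; fix such a point $p \in X(M)$. Because the action is simply transitive, for every $\sigma \in \aut(M/A)$ the points $p$ and $\sigma(p)$ both lie in $X(M)$ (the action and $X$ being $A$-definable, $\sigma$ permutes $X(M)$), so there is a unique element $\varphi_X(\sigma) \in G(M)$ with $\rho(p, \varphi_X(\sigma)) = \sigma(p)$, i.e. $p \cdot \varphi_X(\sigma) = \sigma(p)$. The first step is to check this $\varphi_X$ is a cocycle: applying $\sigma$ to $p \cdot \varphi_X(\tau) = \tau(p)$ gives $\sigma(p)\cdot \sigma(\varphi_X(\tau)) = \sigma\tau(p)$, and substituting $\sigma(p) = p\cdot\varphi_X(\sigma)$ yields $p \cdot \varphi_X(\sigma)\sigma(\varphi_X(\tau)) = \sigma\tau(p) = p\cdot\varphi_X(\sigma\tau)$; uniqueness then forces the cocycle identity. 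The definability clause of the cocycle definition should follow because $p$ is a tuple from $M$ and the assignment $\sigma \mapsto \varphi_X(\sigma)$ is computed by the $A$-definable action map applied to $p$ and $\sigma(p)$, so one can take $\bar a$ to encode $p$ and $h$ to be the definable function extracting the unique group element.

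Next I would check well-definedness at the level of classes. If $p'$ is another point of $X(M)$, then $p' = p\cdot c$ for a unique $c \in G(M)$, and a direct computation shows the resulting cocycle $\varphi_X'$ satisfies $\varphi_X'(\sigma) = c^{-1}\varphi_X(\sigma)\sigma(c)$, so the two cocycles are cohomologous; hence the cohomology class is independent of the chosen point. I would then verify that an isomorphism of PHSs carries one point to a point of the target and induces the same cocycle, so the assignment $[X] \mapsto [\varphi_X]$ descends to a well-defined map $\PP_{\defin}(M/A,G(M)) \to \HH^1_{\defin}(M/A,G(M))$, and that $G$ itself with basepoint $e$ yields the constant cocycle, so basepoints are preserved.

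For the inverse direction, given a cocycle $\varphi$ I would build a PHS by twisting $G$ by $\varphi$: take the underlying set to be $G$ but equip it with a twisted $\aut(M/A)$-descent datum so that the new $A$-definable structure makes $\varphi$ the coboundary-type cocycle associated to a distinguished point. Concretely, the cocycle condition guarantees that the map $g \mapsto \varphi(\sigma)\sigma(g)$ defines a consistent Galois action whose fixed set, by atomicity and strong $\omega$-homogeneity (which let us recover $A$-definability from $\aut(M/A)$-invariance of the relevant type), is the $A$-definable PHS $X_\varphi$; the coset structure gives the right $G$-action. I would then check the two composites are the identity: starting from $X$, twisting by $\varphi_X$ reproduces $X$ up to $A$-isomorphism, and starting from $\varphi$, the point corresponding to the identity recovers $[\varphi]$.

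The main obstacle I expect is the inverse construction, specifically proving that the twisted object $X_\varphi$ is genuinely $A$-definable rather than merely $\aut(M/A)$-invariant. This is exactly where the model-theoretic hypotheses do the work: the definability clause in the cocycle definition provides a tuple $\bar a$ and a definable $h$, and one must use atomicity (types over $A$ are isolated, so $\aut(M/A)$-invariant conditions are captured by formulas over $A$) together with strong $\omega$-homogeneity (orbits of $\aut(M/A)$ coincide with types over $A$) to pass from the orbit/invariance data to an honest $A$-definable set carrying a definable action. Verifying that the simple transitivity and the $A$-definability of $\rho$ survive the twist, and that elimination of imaginaries lets us name the relevant quotient, is where the care is needed; the cocycle and cohomology computations themselves are routine group-theoretic manipulations once the definability is secured.
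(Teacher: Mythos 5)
Your forward direction (point $p$ of $X(M)$ gives $\varphi_X(\sigma)$ via $p\cdot\varphi_X(\sigma)=\sigma(p)$, with the action map supplying the definable $h$ and $p$ the parameter $\bar a$), and your verification that changing the point or passing to an isomorphic PHS changes the cocycle only by a coboundary, are exactly the constructions the paper records in Lemma \ref{pPHS to cocycle} and the discussion preceding the proof of Proposition \ref{CocyclePHScorrespB}. That half is fine.

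The inverse direction has a genuine gap. You propose to twist $G$ by the action $\sigma * g = \varphi(\sigma)\sigma(g)$ and take ``the fixed set'' as $X_\varphi$. But the fixed points of that twisted action on $G(M)$ are the elements $g$ with $\varphi(\sigma)=g\sigma(g)^{-1}$ for all $\sigma$, and such a $g$ exists if and only if $\varphi$ is a trivial cocycle; for a nontrivial class your $X_\varphi$ is empty. What you actually need is not a fixed-point set but a descended form of $G$, and in this general definable setting there is no descent theorem to invoke: the criterion ``$\aut(M/A)$-invariant implies $A$-definable'' applies to $M$-definable subsets of the home sort, whereas the twisted form is a new imaginary, so the invariance argument does not attach to any object you have exhibited. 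The construction that closes this gap (Lemma \ref{cocycle to pPHS}, following \cite{pillay_1997}) is explicit: let $\bar a$ be the definability parameter of $\varphi$, let $Z$ be the $A$-definable set of realizations of the isolated type $\tp(\bar a/A)$, and define $E$ on $Z\times G$ by $(\bar a_1,g_1)E(\bar a_2,g_2)$ iff $h(\bar a_1,\bar a_2)g_2=g_1$. The cocycle condition yields $h(\bar a,\bar c)=h(\bar a,\bar b)h(\bar b,\bar c)$ (Fact \ref{cocycle consequence for h}), which is what makes $E$ an $A$-definable equivalence relation; elimination of imaginaries then realizes $X=(Z\times G)/E$ as an $A$-definable set, right multiplication on the $G$-coordinate descends to a simply transitive action, and the class of $(\bar a,e)$ is the distinguished point recovering $\varphi$. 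Your instinct that atomicity, homogeneity, and EI are where the work lies is correct, but the proof requires this concrete quotient, not a descent datum whose existence is the very thing in question.
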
 

An extension of parameters $B$, with $A \subseteq B \subseteq M$, is called normal if it is closed under type (in finitely many variables) in $M$ over $A$: Let $\bar{b}$ be a finite tuple from $B$. If $\bar{b}'$ is a finite tuple from $M$ with $\tp(\bar{b}/A) = \tp(\bar{b}'/A)$ then $\bar{b}'$ must also be contained in $B$. By $\aut(B/A)$ we mean the group of elementary permutations of $B$ fixing $A$. In Section 2 of this paper we will review normal extensions and describe some additional properties which they satisfy in the three settings of atomic and strongly $\omega$-homogeneous structures discussed in \cite{pillay_1997}. 

In Section 3 of this paper we will define the definable Galois cohomology set $\HH^1_{\defin}(B/A,G(B))$ where $B$ is a normal extension of $A$ in $M$ and generalize the main result of \cite{pillay_1997} to show that $\HH^1_{\defin}(B/A,G(B))$ classifies exactly the isomorphism classes of PHS for $G$ which contain a $B$-point. 

In \cite{pillay2024automorphism}, Pillay adapted the notion of a (relatively) definable subset of $\aut(M/A)$ from the setting of a saturated model to that of an atomic and strongly $\omega$-homogeneous structure. This yields an equivalent definition of a definable cocycle (see Lemma 3.5 of \cite{pillay2024automorphism}). Also in \cite{pillay2024automorphism}, Pillay introduces (relatively) definable subsets of $\aut(B/A)$ for normal extensions $B$ of $A$ in $M$. At the end of Section 3 below, we will show that our notion of definable cocycle, introduced at the beginning of Section 3 for the setting of a normal extension, has an equivalent phrasing using the definition of definable subset of $\aut(B/A)$ from \cite{pillay2024automorphism}. 

In Section 4 we will prove the main result of this paper: For $B$, a normal extension of $A$ contained in $M$, a short exact sequence $$1 \to \aut(M/B) \to \aut(M/A) \to \aut(B/A) \to 1$$ induces the following short exact sequence in definable Galois cohomology:

\vspace{-4mm}

\[1 \to \HH^1_{\defin}(B/A,G(B)) \to \HH^1_{\defin}(M/A,G(M)) \to \HH^1_{\defin}(M/B,G(M))^{\aut(B/A)}.\] 

We begin section 4 with an adaptation of Serre's ``transform" action of $\aut(B/A)$ on $\HH^1_{\defin}(M/B,G(M))$ to conclude that the image of the final map of the sequence lands in the fixed points of this action.

The short exact sequence of the present paper is a generalization of a classical algebraic statement appearing in Chapter 1 Section 5.8 of Serre's \textit{Galois Cohomology} \cite{serre1979galois}. The result was proved by Kolchin in the differential setting as Theorem 1 of Chapter 7 Section 2 of \cite{kolchin1985differential}. Kolchin's statement does not include mention of the action of $\aut(B/A)$ on the final term so we have strengthened the differential statement slightly. The result is novel in the general definable setting of \cite{pillay_1997}. Together with the long exact sequence of \cite{OmarAnandDavid}, our result brings definable Galois cohomology into a more fully developed state.  


\subsection*{Acknowledgements}

The author would like to thank Anand Pillay for many helpful comments and suggestions, and especially for several discussions which led to a simplification of the material in section 2. The author would like to thank the referee for suggesting the inclusion of an adaptation of Serre's transform action on the final term of the short exact sequence.  

\section{Normal extensions}

We continue in the setting of \cite{pillay_1997}. Namely, $T$ is a complete theory eliminating imaginaries. We work in a model $M$ of $T$ which is atomic and strongly $\omega$-homogeneous over a set of parameters $A$.

The following definition precedes Lemma 9.2.6 of \cite{tent_ziegler_2012}.

\begin{definition} 
An extension $A \subseteq B$ of parameters in $M$ is said to be normal if, for any finite tuple $\bar{b}$ from $B$, every realization of $\tp(\bar{b}/A)$ in $M$ is contained in $B$.
\end{definition}

If $B$ is a normal extension of $A$, then $B$ is $\aut(M/A)$-invariant and the restriction map from $\aut(M/A)$ to $\aut(B/A)$, the group of elementary permutations of $B$ fixing $A$, is a group homomorphism with kernel $\aut(M/B)$. So there is a short exact sequence 
$$1 \to \aut(M/B) \to \aut(M/A) \to \aut(B/A).$$

In \cite{pillay_1997}, three model-theoretic settings are discussed in which the conditions on $A \subseteq M$ of atomicity and strong $\omega$-homogeneity are satisfied. These conditions ensure that Proposition \ref{PHS_cocycle_correspondence_aut(M/A)} holds, i.e., that definable Galois cohomology classifies principal homogeneous spaces for definable groups. 

In the first setting, $M$ is taken to be the prime model over $A$ of an $\omega$-stable theory. In the second setting, $M$ is taken to be $\acl(A)$. In the third setting, $M$ is the countable model of an $\omega$-categorical theory. We will now address the assumptions on $B$ needed in each setting so that the final map in the above sequence is surjective, that is, every elementary permutation of $B$ fixing $A$ extends to an automorphism of $M$ fixing $A$.

When $M$ is the model-theoretic algebraic closure of $A$ or the prime model over $A$ of an $\omega$-stable theory, then for any normal extension $B$ in $M$ over $A$ the restriction map from $\aut(M/A)$ to $\aut(B/A)$ is surjective. In the $\omega$-categorical setting $B\backslash A$ must be finite for the restriction map to be surjective.

\begin{remark}
    We slightly generalize the first setting of an $\omega$-stable theory $T$ to a totally transcendental theory $T$. Note that any $\omega$-stable theory is totally transcendental, the converse being true when $T$ is countable.
\end{remark}

To see the surjectivity of the restriction map in the setting of the prime model of a totally transcendental theory, we use a fundamental characterization of such models which can be found in chapter 9 of \cite{tent_ziegler_2012}: 

\begin{fact}\label{primemodelchar}
    Let $T$ be a totally transcendental theory. A model $M$ of $T$ is prime over a set of parameters $A$ if and only if it is atomic over $A$ and contains no uncountable $A$-indiscernible sequence.
\end{fact}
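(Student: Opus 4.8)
The plan is to route both implications through the notion of a model \emph{constructible over $A$}, i.e.\ one admitting an enumeration $M\setminus A=(a_\alpha)_{\alpha<\mu}$ with $\tp(a_\alpha/A\cup\{a_\beta:\beta<\alpha\})$ isolated for every $\alpha$. The equivalence ``prime over $A$ $\iff$ constructible over $A$'' in a totally transcendental theory is the technical core, since constructibility transparently yields both atomicity and a bound on indiscernible sequences. Throughout I would use the standard structure theory of constructible sets in $\omega$-stable theories developed in Chapter~9 of \cite{tent_ziegler_2012}: over any parameter set a constructible model exists (as isolated types are dense over every set), a constructible model is atomic over its base (by transitivity of isolation), and a constructible model contains no uncountable indiscernible sequence over its base.

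For the forward direction, fix a constructible model $N_0\supseteq A$. Being atomic over $A$, $N_0$ omits every non-isolated type in $S(A)$, and by the above it carries no uncountable $A$-indiscernible sequence. Primeness of $M$ supplies an elementary embedding $f:M\hookrightarrow N_0$ over $A$. If some finite tuple $\bar a\in M$ realized a non-isolated type $p\in S(A)$, then $f(\bar a)$ would realize $p$ in $N_0$, contradicting that $N_0$ omits $p$; hence $M$ is atomic over $A$. Likewise, an uncountable $A$-indiscernible sequence in $M$ would map under $f$ to one in $N_0$, which is impossible; hence $M$ has none.

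For the backward direction, assume $M$ is atomic over $A$ with no uncountable $A$-indiscernible sequence. It suffices to show $M$ is constructible over $A$, for then $M$ is prime: given any $N\supseteq A$, one builds an elementary embedding by the evident recursion along the construction, at each step realizing in $N$ the transported isolating formula, which is possible since isolated types are realized in every model. Constructibility is proved by a transfinite recursion enumerating $M\setminus A$ so that each new element has isolated type over the set built so far. \textbf{The main obstacle} is to show this recursion cannot stall, i.e.\ that there is no proper $C$ with $A\subseteq C\subsetneq M$ all of whose exterior points $M\setminus C$ have non-isolated type over $C$. This is exactly where the hypotheses are used: total transcendentality guarantees isolated types over $C$ exist abstractly, but the content is that \emph{some element of $M$} realizes one, and a stall would instead force a configuration from which, using total transcendentality and the stability of $T$, one extracts an uncountable $C$-indiscernible --- hence $A$-indiscernible, as $A\subseteq C$ --- sequence in $M\setminus C$, contradicting the hypothesis. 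Carrying out this extraction, together with the isolation-transitivity lemmas ensuring the running set remains constructible at limit stages, is the technical heart of the argument.
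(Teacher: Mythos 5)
The paper itself offers no proof of this statement: it is imported as a Fact from Chapter 9 of \cite{tent_ziegler_2012} (the Ressayre--Harnik characterization of prime models of a totally transcendental theory), so there is no in-paper argument to measure yours against. Your chosen route --- prime $\iff$ constructible $\iff$ atomic with no uncountable indiscernible sequence --- is precisely the route of the cited reference, and your forward direction is complete modulo the two standard facts you invoke (a constructible model over $A$ exists because isolated types are dense over every set in a t.t.\ theory, and such a model is atomic over $A$ and contains no uncountable $A$-indiscernible family).

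The backward direction, however, contains a genuine gap, and it sits exactly where you flag ``the main obstacle.'' The whole content of the theorem is the claim that if $A\subseteq C\subsetneq M$ and no element of $M\setminus C$ realizes an isolated type over $C$, then $M$ contains an uncountable $A$-indiscernible sequence; you assert that such a sequence ``is extracted'' using total transcendentality and stability, but give no indication of the mechanism. This step is not routine and is of essentially the same depth as the theorem itself: the known arguments choose an element $a\in M\setminus C$ and a finite $c\subseteq C$ over which $\tp(a/C)$ is the nonforking (Morley-rank-and-degree-preserving) extension of $\tp(a/c)$, and then iterate transfinitely to build a Morley sequence of length $\aleph_1$, checking at each stage --- using atomicity of $M$ over $A$ and the stall hypothesis --- that the next element of the sequence can be found inside $M$ rather than merely in a monster model. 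Without at least the skeleton of that construction, your proof reduces the statement to an unproved claim rather than establishing it. (Minor additional items you wave at but should record: that unions of constructions at limit stages are again constructions, and that the elementary map built in ``constructible $\Rightarrow$ prime'' is elementary at limit stages; these are easy, unlike the stall argument.)
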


\begin{lemma}
    Let $M$ be the prime model over $A$ of a totally transcendental theory $T$. Let $B$ be a normal extension over $A$ in $M$. Then
    \begin{enumerate}[(i)]
        \item $M$ is atomic over $B$.
        \item $M$ is prime over $B$.
        \item The restriction homomorphism $\aut(M/A) \to \aut(B/A)$ is surjective.
    \end{enumerate}
\end{lemma}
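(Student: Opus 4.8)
The plan is to prove the three items in order, deducing (ii) from (i) together with Fact \ref{primemodelchar}, and (iii) from (ii). The real work is in (i); once $M$ is known to be atomic over $B$, the other two parts are comparatively soft.

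For (i), fix a finite tuple $\cc$ from $M$; I must show $\tp(\cc/B)$ is isolated. Since $T$ is $\omega$-stable, the type $\tp(\cc/B)$ has a finite canonical base, so there is a finite tuple $\bb\subseteq B$ over which $\tp(\cc/B)$ is definable; let $(d_\psi)$ be its defining scheme, with all $d_\psi$ over $A\bb$. Because $M$ is atomic over $A$, the type $\tp(\cc\bb/A)$ is isolated, and hence so is $\tp(\cc/A\bb)$; fix a formula $\theta(\xx,\bb)$ isolating it. The claim is that this same $\theta(\xx,\bb)$, now read as a formula over $B$, already isolates $\tp(\cc/B)$.

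To prove the claim I would argue by contradiction: suppose some $\delta(\xx,\bb')\in\tp(\cc/B)$ (with $\bb'\subseteq B$) is not implied by $\theta$, so that $\theta(\xx,\bb)\wedge\neg\delta(\xx,\bb')$ is consistent. Since $T$ is $\omega$-stable, isolated types are dense in the Stone space over any parameter set, so this formula lies in some isolated type over $A\bb\bb'$; as $A\bb\bb'\subseteq M$, that type is realized in $M$, say by $\cc'$, and then $\cc'\models\theta\wedge\neg\delta$. Now $\cc'\models\theta$ forces $\tp(\cc'/A\bb)=\tp(\cc/A\bb)$, i.e.\ $\tp(\cc'\bb/A)=\tp(\cc\bb/A)$, so by strong $\omega$-homogeneity there is $\sigma\in\aut(M/A)$ fixing $\bb$ pointwise with $\sigma(\cc)=\cc'$. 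By normality of $B$ we have $\sigma(B)=B$, and since $\tp(\cc/B)$ is definable over $A\bb$—which $\sigma$ fixes pointwise—$\sigma$ fixes the type $\tp(\cc/B)$: for $\bb''\subseteq B$ and any $\psi$, membership of $\psi(\xx,\bb'')$ in $\tp(\sigma\cc/B)$ is governed by $d_\psi(\sigma^{-1}\bb'')$, which holds iff $d_\psi(\bb'')$ holds. Hence $\tp(\cc'/B)=\tp(\cc/B)$ and in particular $\cc'\models\delta$, contradicting $\cc'\models\neg\delta$. This proves $\theta(\xx,\bb)$ isolates $\tp(\cc/B)$, giving (i). I expect this passage—upgrading from ``$M$ realizes a unique type over $B$ extending $\tp(\cc/A\bb)$'' to genuine isolation in the Stone space $S(B)$—to be the main obstacle, and the two ingredients that make it go through are the density of isolated types in a totally transcendental theory (to manufacture the offending realization inside $M$) and normality (to keep $\sigma$ inside $\aut(M/A)$ while permuting $B$).

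For (ii), any $B$-indiscernible sequence is a fortiori $A$-indiscernible, so the absence of uncountable $A$-indiscernible sequences in $M$ (part of Fact \ref{primemodelchar}, since $M$ is prime over $A$) yields the absence of uncountable $B$-indiscernible ones; combined with (i) and the converse direction of Fact \ref{primemodelchar}, this gives that $M$ is prime over $B$. For (iii), given $\tau\in\aut(B/A)$, regard it as an $A$-elementary bijection of $B$; since by (ii) $M$ is a prime model over $B$, the standard extension property of prime models in $\omega$-stable theories lets me extend $\tau$ to an elementary embedding $M\to M$, which is surjective by uniqueness and homogeneity of the prime model. The resulting $\sigma\in\aut(M)$ fixes $A$ (because $\tau$ does) and restricts to $\tau$ on $B$, so the restriction map $\aut(M/A)\to\aut(B/A)$ is surjective.
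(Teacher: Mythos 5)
Your overall architecture (deduce (ii) from (i) via Fact \ref{primemodelchar}, and (iii) from (ii)) matches the paper's, and parts (ii) is fine; but there are two real soft spots. The serious one is in (i), at the sentence ``the type $\tp(\cc/B)$ has a finite canonical base, so there is a finite tuple $\bb\subseteq B$ over which $\tp(\cc/B)$ is definable \dots with all $d_\psi$ over $A\bb$.'' What superstability actually gives you is that $\tp(\cc/B)$ does not fork over some finite $\bb\subseteq B$ and is therefore definable over $\acl^{eq}(\bb)$ --- the canonical base of the stationarization lives in $\acl^{eq}(\bb)$, not in $\dcl^{eq}(A\bb)$. This matters, because your contradiction hinges precisely on the automorphism $\sigma\in\aut(M/A\bb)$ fixing the defining scheme pointwise: if the $d_\psi$ are merely algebraic over $\bb$, $\sigma$ may permute them, $\tp(\sigma\cc/B)$ may be a conjugate of $\tp(\cc/B)$ rather than equal to it, and the argument breaks. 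A normal extension $B$ need not be $\dcl$- or $\acl$-closed (the paper only imposes $\dcl$-closure later, in Section 3), so you cannot absorb the canonical base into $B$. The claim you need is in fact true, but it requires an extra idea you have not supplied: since $\tp(\cc/B)$ is a complete type over $B$, one can replace each $d_\psi$ by the disjunction of its finitely many $\aut(\mathfrak{C}/B)$-conjugates, which has the same trace on $B$-tuples and is coded in $\dcl^{eq}(B)\cap\dcl^{eq}(\text{finite tuple})$, hence in $\dcl^{eq}$ of a finite subset of $B$. Without that (or some substitute), step (i) has a genuine gap.

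The second, more minor, issue is in (iii): an elementary self-embedding $f:M\to M$ extending $\tau$ obtained from primeness of $M$ over $B$ need not be surjective (think of a proper elementary self-embedding of the prime model), so ``surjective by uniqueness and homogeneity'' is not correct as stated. The repair is the paper's argument: $f(M)$ is again prime over $B$, uniqueness of prime models gives an isomorphism $g:f(M)\to M$ over $B$, and $g\circ f$ is the desired automorphism extending $\tau$. Finally, it is worth noting that the paper's proof of (i) avoids canonical bases entirely: it uses density of isolated types over $B$ to find $\psi(\xx,\dd)$ isolating a complete type over $B$ containing the $A$-isolating formula of $\tp(\cc/A)$, realizes it in $M$, moves the realization back to $\cc$ by an automorphism over $A$, and uses normality to see that the transported parameter $\sigma(\dd)$ stays in $B$. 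That route is both shorter and sidesteps the $\acl^{eq}$ versus $\dcl^{eq}$ subtlety on which your argument currently founders.
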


\begin{proof}
    \begin{enumerate}[(i)]
        \item This can also be found in chapter 9 of \cite{tent_ziegler_2012}.  Note that the isolated types are dense in the type space over arbitrary parameter sets in totally transcendental theories. 
        
        Let $\bb \in M^n$. Let $\phi(\xx)$ isolate $\tp(\bb/A)$. Since the isolated types are dense over $B$, there is a formula $\psi(\xx,\dd)$ isolating a complete type over $B$ which contains the formula $\phi(\xx)$.  Let $\bb'$ realize $\psi(\xx,\dd)$ in $M$. Then $\bb'$ realizes $\phi(\xx)$ and so $\bb$ and $\bb'$ have the same type over $A$. By the strong $\omega$-homogeneity of $M$ there is an automorphism $\sigma$ of $M$ taking $\bb'$ to $\bb$. Let $\ee = \sigma(\dd)$, which by normality must lie in $B$. Then $\psi(\xx,\ee)$ isolates the type of $\bb$ over $B$.
        \item As $A \subseteq B$, any $B$-indicernible sequence is $A$-indicernible. As $M$ is prime over $A$, it contains no uncountable $A$-indicernible sequence and therefore no uncountable $B$-indicernible sequence. This, together with (i), shows from Fact \ref{primemodelchar} that $M$ is prime over $B$. 
        \item Let $\sigma$ be an elementary permutation of $B$ over $A$. Let $\bar{M}$ be a sufficiently saturated and strongly homogeneous model of $T$. Then we can assume that $\bar{M}$ contains $M$ and that $\sigma$ extends to an automorphism $\gamma\in \aut(\bar{M}/A)$. Let $M' = \gamma(M)$. By the characterization of prime models, $M'$ is still the prime model over $B$. By the uniqueness of prime models of an $\omega$-stable theory, there is an isomorphism $\tau:M' \to M$ fixing $B$. Then $\tau \circ \gamma|_M:M \to M$ is an automorphism of $M$ extending $\sigma$.
    \end{enumerate}
\end{proof}

    

\begin{proposition}
    Let $B$ be normal in $M$ over $A$ where $M$ is $\acl(A)$ or the prime model of a totally transcendental theory or where $M$ is $\omega$-categorical and additionally we require $B\backslash A$ to be finite. Then there is a short exact sequence of groups: $$1 \to \aut(M/B) \to \aut(M/A) \to \aut(B/A) \to 1$$
\end{proposition}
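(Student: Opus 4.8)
The plan is to note that two of the three exactness conditions are already in hand from the general discussion of normal extensions in this section: the map $\aut(M/B) \to \aut(M/A)$ is the inclusion, hence injective, and since $B$ is normal the restriction map $\aut(M/A) \to \aut(B/A)$ is a well-defined homomorphism with kernel exactly $\aut(M/B)$. Thus the whole content of the proposition is exactness at $\aut(B/A)$, i.e.\ surjectivity of the restriction map, equivalently the assertion that every elementary permutation $\sigma$ of $B$ fixing $A$ extends to an automorphism of $M$ fixing $A$. I would prove this separately in each of the three settings.

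When $M = \acl(A)$, I would pass to a sufficiently strongly homogeneous elementary extension $\bar{M} \succeq M$. Given $\sigma \in \aut(B/A)$, strong homogeneity of $\bar{M}$ extends the elementary map $\sigma$ to some $\gamma \in \aut(\bar{M}/A)$. As $\gamma$ fixes $A$ pointwise and algebraic closure is preserved under automorphisms, $\gamma(\acl(A)) = \acl(A)$, that is $\gamma(M) = M$; hence $\gamma|_M \in \aut(M/A)$ and its restriction to $B$ is $\gamma|_B = \sigma$. In the $\omega$-categorical setting with $B \setminus A$ finite, let $\bb$ enumerate the finite set $B \setminus A$. Since $\sigma$ is elementary and fixes $A$, we have $\tp(\bb/A) = \tp(\sigma(\bb)/A)$, so strong $\omega$-homogeneity of $M$ provides $\tau \in \aut(M/A)$ with $\tau(\bb) = \sigma(\bb)$; as $\tau$ fixes $A$ and agrees with $\sigma$ on $\bb$, it agrees with $\sigma$ on all of $B$, while normality gives $\tau(B) = B$. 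In both cases the produced automorphism witnesses surjectivity.

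For $M$ the prime model of an $\omega$-stable theory, surjectivity is precisely part (iii) of the preceding Lemma, which I would simply cite; combining the three cases then yields exactness at every position. The step carrying the real weight is this prime model case, which is exactly why it was isolated as part (iii): there the naive argument fails, because extending $\sigma$ to $\gamma \in \aut(\bar{M}/A)$ and restricting gives an automorphism onto $\gamma(M)$, which need only be \emph{some} prime model over $B$ rather than $M$ itself. Correcting this requires parts (i) and (ii) of the Lemma, establishing that $M$ is atomic and hence prime over $B$, together with the uniqueness of prime models over $B$ in an $\omega$-stable theory, to supply an isomorphism fixing $B$ that carries $\gamma(M)$ back to $M$. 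By comparison the other two cases are direct extension arguments, the $\omega$-categorical one relying essentially on the finiteness of $B \setminus A$ so that strong $\omega$-homogeneity, which concerns only finite tuples, can be applied.
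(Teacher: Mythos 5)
Your proposal is correct and follows essentially the same route as the paper: reduce to surjectivity of restriction, handle the $\omega$-categorical case by applying strong $\omega$-homogeneity to a finite enumeration of $B\setminus A$, handle $M=\acl(A)$ by a direct extension argument (the paper phrases this as a standard back-and-forth, you equivalently extend in a large homogeneous elementary extension and note $\gamma(\acl(A))=\acl(A)$), and cite part (iii) of the preceding lemma for the prime model case. Your remark on why the naive monster-model argument fails for prime models, and is repaired by uniqueness of prime models over $B$, accurately reflects the content of that lemma.
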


\begin{proof}
    In the $\omega$-categorical case the surjectivity follows from the strong $\omega$-homogeneity of $M$ and the fact that we have taken $B\backslash A$ to be finite. In the case where $M = \acl(A)$, the surjectivity follows from a standard back and forth argument. The preceding lemma covers the case of the prime model of a totally transcendental theory. 
\end{proof}

\section{Definable Galois cohomology for normal extensions}

We continue to work in the setting of \cite{pillay_1997}. Let $T$ be a complete theory eliminating imaginaries (EI). Let $M$ be a model of $T$ containing a set of parameters $A$, over which $M$ is atomic and strongly $\omega$-homogeneous. 

Let $B$ be a normal extension of $A$ in $M$ such that elementary permutations of $B$ over $A$ (which we denote $\aut(B/A)$) extend to automorphisms of $M$ over $A$ and such that $M$ is atomic and strongly $\omega$-homogeneous over $B$. We have in mind the three model-theoretic settings of \cite{pillay_1997} discussed in the last section: i.e. when $M$ is the prime model over $A$ of a totally transcendental theory or $M$ is $\acl(A)$ then any normal intermediate extension $B$ will satisfy these conditions. When $M$ is the countable model of an $\omega$-categorical theory, $B\backslash A$ must be finite in addition to $B$ being normal.  

By the atomicity and strong $\omega$-homogeneity of $M$ over $B$, we have that an $M$-definable $\aut(M/B)$-invariant set is $B$-definable, from which it follows that the fixed set of the action of $\aut(M/B)$ on $M$ is $\dcl(B)$.  

\begin{definition}
    Let $G$ be an $A$-definable group and $X$ an $A$-definable set. Let $\varphi(\bar{x})$ and $\psi(\bar{y})$ be $A$-formulas defining the underlying set of $G$ and $X$ respectively. By $G(B)$ we mean $$\{g \in B^{|\bar{x}|} :  M \models \varphi(g) \}$$ or equivalently, $G(M) \cap B^{|\bar{x}|}$.  Similarly, by $X(B)$ we mean $$\{p \in B^{|\bar{y}|} : 
 M  \models \psi(p)\}$$ equivalently, $X(M)\cap B^{|\bar{y}|}$. We will refer to elements of $G(B)$ and $X(B)$ as $B$-points of $G(M)$ and $X(M)$.
\end{definition}

We will also now require $B = \dcl(B)$ for two reasons, firstly, so that $B$ is exactly the fixed set of the action of $\aut(M/B)$ on $M$ and secondly, so that $G(B)$ is a subgroup of $G(M)$.

\begin{lemma}
    Let $G$ be an $A$-definable group. Let $A \subseteq B \subseteq M$ with $B = \dcl(B)$. Then $G(B)$ is an abstract subgroup of $G(M)$. Moreover, if $X$ is an $A$-definable principal homogeneous space for $G$ which contains a $B$-point, then $X(B)$ is an abstract principal homogeneous space for $G(B)$ equivariant with respect to the left action of $\aut(B/A)$. 
\end{lemma}

\begin{proof}
    Again, let $\varphi(\bar{x})$ and $\psi(\bar{y})$ be $A$-formulas defining the underlying set of $G$ and $X$ respectively.
    
    Let $e \in G$ be the identity. As $G$ is $A$-definable, $e \in G(\dcl(A)) \subseteq G(B)$. Let $g_1,g_2 \in G(B)$. Their product $g_1g_2$ is definable over $A\cup \{g_1, g_2\}$, that is $g_1g_2 \in \dcl(A,g_1,g_2)$ and so must lie in $\dcl(B)$ as $\dcl(B) = B$. We have $g_1g_2 \in B^{|\bar{x}|} \cap G(M) = G(B)$. Likewise, $g_1^{-1}$ is definable over $\dcl(A,g_1)$, so $g_1^{-1} \in B^{|\bar{x}|} \cap G(M) = G(B)$. Hence $G(B)$ is an abstract subgroup of $G(M)$.
    
    By assumption $X(B) \neq \emptyset$, so let $p \in X(B)$. 
    Let $g \in G(B)$, then $pg \in \dcl(A,p,g)$. As $\dcl(A,p,g) \subseteq \dcl(B) = B$, $pg \in B^{|\bar{y}|}$. Hence $pg \in X(B)$. So $G(B)$ acts on $X(B)$. Now let $p' \in X(B)$. As $X$ is an $A$-definable PHS for $G$, there is a unique element $g \in G(M)$ such that $pg = g'$. So $g \in \dcl(A,p,p')$. As $\dcl(A,p,p') \subseteq \dcl(B) = B$, $g \in B^{|\bar{x}|}$. Hence $g \in G(B)$. So $X(B)$ is an abstract PHS for $G(B)$. 

    Let $\sigma \in \aut(B/A)$. Let $p \in X(B)$ and $g \in G(B)$ then $\sigma(pg) = \sigma(p)\sigma(g)$ as the action is $A$-definable and $\sigma$ is an $A$-elementary permutation of $B$. So the equivariance of the action is verified.
\end{proof}

In this section we will define definable Galois cohomology in the setting of a normal extension $A \subseteq B$ and introduce the notation $\HH^1_{\defin}(B/A,G(B))$. We then introduce some further notation for certain pointed sets of isomorphism classes of definable PHSs. We then prove that $\HH^1_{\defin}(B/A,G(B))$ classifies exactly the isomorphism classes of definable PHSs for $G$ which contain a $B$-point. This refines Proposition \ref{PHS_cocycle_correspondence_aut(M/A)}, the main result of \cite{pillay_1997}, to apply in the setting of normal intermediate extensions. 

\begin{definition}\label{def coycle}
    A definable (right) cocycle $\varphi:\aut(B/A) \to G(B)$ is a function $\varphi$ together with an $A$-definable function $h(\xx,\yy)$ and a tuple $\aaa$ from $B$ satisfying $\forall \sigma, \tau \in \aut(B/A)$,

\begin{enumerate}[(i)]
    \item $\varphi(\sigma\tau) = \varphi(\sigma)\sigma(\varphi(\tau))$ (cocycle condition)
    \item $\varphi(\sigma) = h(\aaa,\sigma(\aaa))$ (definability condition)
\end{enumerate}

The definable cocycles from $\aut(B/A)$ to $G(B)$ form a pointed set, which we denote $Z^1_{\defin}(B/A,G(B))$. We define the basepoint of $Z^1_{\defin}(B/A,G(B))$ to be the constant cocycle which sends every elementary permutation to the identity of $G$. 

Two definable cocycles $\varphi$ and $\psi$ are said to be cohomologous if there is an element $b \in G(B)$ such that  $$\varphi(\sigma) = b^{-1}\psi(\sigma)\sigma(b)$$ for all $\sigma \in \aut(B/A)$. As we required $B$ to be $\dcl$-closed, $G(B)$ is a group and cohomology is again an equivalence relation on $\ZZ^1_{\defin}(B/A,G(B))$. Again we call a cocycle trivial if it is cohomologous to the constant cocycle. Trivial cocycles are therefore of the form $$\varphi(\sigma) = b^{-1}\sigma(b)$$ for some $b \in G(B)$. The quotient of $\ZZ^1_{\defin}(B/A,G(B))$ via the cohomology equivalence relation is again a pointed set which we denote $\HH^1_{\defin}(B/A,G(B))$ and which has the class of trivial cocycles as its basepoint. 
\end{definition}

We now describe the relationship between the pointed sets $\ZZ^1_{\defin}(B/A,G(B))$ and $\ZZ^1_{\defin}(M/A,G(M))$. Let $\pi:\aut(M/A) \to \aut(B/A)$ be the projection homomorphism which via our assumptions is surjective. 

\begin{lemma}\label{ZZ inj}
    Let $\varphi \in \ZZ^1_{\defin}(B/A,G(B))$, then $$- \circ\pi: \ZZ^1_{\defin}(B/A,G(B)) \to \ZZ^1_{\defin}(M/A,G(M))$$ defined by $\varphi \mapsto \varphi \circ \pi$ is an injective map of pointed sets whose image is exactly the set of definable cocycles in $\ZZ^1_{\defin}(M/A,G(M))$ which have definability parameter $\bar{a}$ in $B$. Moreover $-\circ \pi$ descends to an injective map on cohomology, $$\pi^1: \HH^1_{\defin}(B/A,G(B)) \to \HH^1_{\defin}(M/A,G(M)).$$
\end{lemma}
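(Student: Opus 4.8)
The plan is to prove three things: that $-\circ\pi$ lands in the correct set, that its image is exactly the cocycles with parameter in $B$, and that it is injective both before and after passing to cohomology.

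The plan is to verify the four assertions in turn: that $-\circ\pi$ is well-defined, that it is injective, that its image is exactly the cocycles with parameter in $B$, and finally that it descends to an injective map $\pi^1$ on cohomology. Throughout I would lean on two facts: the surjectivity of $\pi$ (guaranteed by our standing hypotheses) and the fact, recorded above, that the fixed set of $\aut(M/B)$ acting on $M$ is $\dcl(B) = B$.

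First I would check well-definedness. Given $\varphi \in \ZZ^1_{\defin}(B/A, G(B))$ with definability data $h(\xx,\yy)$ and $\aaa \in B$, the composite $\varphi\circ\pi$ inherits the definability condition immediately: since $\pi(\sigma) = \sigma|_B$ and $\aaa \in B$, we have $(\varphi\circ\pi)(\sigma) = h(\aaa,\pi(\sigma)(\aaa)) = h(\aaa,\sigma(\aaa))$, so $\varphi\circ\pi$ is definable with the same parameter $\aaa \in B$. The cocycle condition transfers because $\pi$ is a homomorphism and because, for $b \in G(B)$, the action of $\sigma$ on $b$ agrees with that of $\pi(\sigma)=\sigma|_B$, both being determined by $\sigma|_B$ and the coordinates of $b$ lying in $B$. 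Injectivity of $-\circ\pi$ (which clearly sends basepoint to basepoint) is then immediate from surjectivity of $\pi$: if $\varphi_1\circ\pi = \varphi_2\circ\pi$, every $\rho \in \aut(B/A)$ equals $\pi(\sigma)$ for some $\sigma$, so $\varphi_1(\rho) = \varphi_2(\rho)$.

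For the image, the computation above already shows every cocycle in the image has its parameter in $B$. For the reverse inclusion I would start from $\psi \in \ZZ^1_{\defin}(M/A,G(M))$ with $\psi(\sigma) = h(\aaa,\sigma(\aaa))$ for some $\aaa \in B$ and define $\varphi(\rho) := h(\aaa,\rho(\aaa))$ for $\rho \in \aut(B/A)$. By construction $\varphi\circ\pi = \psi$; since $\aaa,\rho(\aaa) \in B = \dcl(B)$ and $h$ is $A$-definable, each value $\varphi(\rho)$ lies in $G(B)$; and the cocycle condition for $\varphi$ is deduced from that of $\psi$ by lifting any $\rho_1,\rho_2$ to $\sigma_1,\sigma_2$ via surjectivity of $\pi$ and using $\varphi\circ\pi = \psi$ together with $\sigma_1|_B = \rho_1$. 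Thus $\varphi \in \ZZ^1_{\defin}(B/A,G(B))$ and maps to $\psi$.

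Finally, descent to cohomology. That cohomologous cocycles over $B$ map to cohomologous cocycles over $M$ is routine: a witness $b \in G(B) \subseteq G(M)$ for $\varphi_1 \sim \varphi_2$ works verbatim for $\varphi_1\circ\pi \sim \varphi_2\circ\pi$, again since $\sigma(b) = \pi(\sigma)(b)$ for $b$ with coordinates in $B$. The substantive point, and the step I expect to be the main obstacle, is injectivity of $\pi^1$: here one is handed a witness $b \in G(M)$ with $(\varphi_1\circ\pi)(\sigma) = b^{-1}(\varphi_2\circ\pi)(\sigma)\sigma(b)$ for all $\sigma \in \aut(M/A)$, and must show $b$ may be taken in $G(B)$. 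The idea is to evaluate this identity on the kernel $\aut(M/B)$ of $\pi$: for such $\sigma$ both composites take the value $e$ (a cocycle sends the identity to $e$), forcing $b^{-1}\sigma(b) = e$, i.e. $\sigma(b) = b$, for every $\sigma \in \aut(M/B)$. Hence each coordinate of $b$ is fixed by $\aut(M/B)$ and so lies in $\dcl(B) = B$, giving $b \in G(B)$. This same $b$ then witnesses $\varphi_1 \sim \varphi_2$ over $B$ after restricting along $\pi$, which establishes injectivity of $\pi^1$.
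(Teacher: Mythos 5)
Your proposal is correct and follows essentially the same route as the paper: transfer of the cocycle and definability conditions along $\pi$, injectivity from surjectivity of $\pi$, descent of a cocycle with parameter in $B$ to $\aut(B/A)$ for the image characterization, and evaluating the cohomology witness on $\ker\pi=\aut(M/B)$ to force it into $\dcl(B)=B$. The only cosmetic difference is that you justify the vanishing on the kernel via $\varphi(\mathrm{id})=e$ where the paper computes $h(\aaa,\sigma(\aaa))=h(\aaa,\aaa)=e$ directly.
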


\begin{proof}
Let $\varphi$ be a definable cocycle from $\aut(B/A)$ to $G(B)$ with definability data $h(\bar{x},\bar{y})$ and $\bar{a}$ in $B$. We first check that $\varphi \circ \pi$ is still a cocycle witnessed by the same definability data:\\

Let $\sigma, \tau \in \aut(M/A)$, 
\begin{align*}
    \varphi \circ \pi(\sigma\tau) &= \varphi(\pi(\sigma)\cdot \pi(\tau))\\
    &= \varphi(\pi(\sigma))\cdot \pi(\sigma)(\varphi(\pi(\tau)))\\
    &= \varphi\circ \pi(\sigma)\cdot \sigma(\varphi\circ \pi(\tau))
\end{align*}

So the cocycle condition is satisfied. For the definability condition, let $\sigma \in \aut(M/A)$. As $\bar{a}$ is in $B$, $$\varphi\circ \pi(\sigma) = h(\bar{a},\pi(\sigma)(\bar{a})) = h(
\bar{a},\sigma(\bar{a})).$$

Since $\pi$ is surjective, the map $-\circ \pi$ on cocycles is injective.\\

Let $\varphi \in \ZZ^1_{\defin}(M/A,G(M))$ with definability parameter $\bar{a}$ coming from $B$. Let $\sigma \in \aut(M/A)$. The definability condition $\varphi(\sigma) = h(\bar{a},\sigma(\bar{a}))$ shows that $\varphi(\sigma) \in \dcl(\bar{a}, \sigma(\bar{a}))$. By the normality of $B$ and the assumption that $B$ is $\dcl$-closed, we obtain $\varphi(\sigma) \in G(B)$. Furthermore, as $\bar{a}$ comes from $B$ and $\varphi(\sigma) = h(\bar{a},\sigma(\bar{a}))$, $\varphi(\sigma)$ depends only on the restriction $\pi(\sigma)$ and so descends to a well defined map on $\aut(B/A)$ which we call $\tilde{\varphi}$. As $\tilde{\varphi}$ and $\varphi$ have the same image, it is clear that $\tilde{\varphi} \in \ZZ^1_{\defin}(B/A,G(B))$ as witnessed by the same definability data. Lastly, $\tilde{\varphi}\circ \pi = \varphi$. So the image of $\ZZ^1_{\defin}(B/A,G(B))$ under $-\circ\pi$ is exactly the subset of definable cocycles in $\ZZ^1_{\defin}(M/A,G(M))$ which have definability parameter $\bar{a}$ in $B$.\\

Let $\varphi_1$ and $\varphi_2$ in  $Z^1_{\defin}(B/A,G(B))$ be cohomologous as witnessed by $g \in G(B)$. As $G(B) \subseteq G(M)$, the same element $g$ witnesses that $\varphi_1 \circ \pi$ and $\varphi_2 \circ \pi$ are cohomologous. So the map on cocycles descends to a well defined map on cohomology which we call $\pi^*$.\\

Let $\varphi_1$ and $\varphi_2$ be cocycles from $\aut(B/A)$ to $G(B)$ with definability data $h_1$, $\bar{a}_1$ and $h_2$, $\bar{a}_2$ respectively. Suppose that $\varphi_1 \circ \pi$ and $\varphi_2\circ\pi$ are cohomologous via $g \in G(M)$. So for all $\sigma \in \aut(M/A)$, $$g^{-1}h_1(\bar{a}_1,\sigma(\bar{a}_1))\sigma(g) = h_2(\bar{a}_2,\sigma(\bar{a}_2)).$$ As $\bar{a}_1$ and $\bar{a}_2$ lie in $B$, for all $\sigma \in \aut(M/B)$ we have $h(\bar{a}_1,\sigma(\bar{a}_1)) = e$ and $h(\bar{a}_2,\sigma(\bar{a}_2)) = e$  and therefore $g^{-1}\sigma(g) = e$. So $g$ is fixed by the action of $\aut(M/B)$. Then as $B = \dcl(B)$, we have $g \in G(B)$. So the same $g$ witnesses that $\varphi_1$ and $\varphi_2$ are cohomologous. We have then shown that the induced map on cohomology, $\pi^*$, is injective.
\end{proof}

We now alter and extend slightly the notation from \cite{pillay_1997} for pointed sets of isomorphism classes of definable PHSs. We first make some observations. 

As $B$ is $\dcl$-closed, if $X_1$ and $X_2$ are isomorphic $A$-definable PHSs for an $A$-definable group $G$ then $X_1$ has a $B$-point if and only if $X_2$ has a $B$-point. Moreover, if $(X_1,p_1)$ and $(X_2,p_2)$ are isomorphic pointed PHS for $G$, $p_1$ is a $B$-point of $X_1$ if and only if $p_2$ is a $B$-point of $X_2$.

\begin{definition}
    The $A$-isomorphism classes of $A$-definable pointed PHS for $G$ whose representatives all have a $B$-point as their basepoint form a sub-pointed set of $\PP_{\defin}(M/A,G(M))$ which we will denote $\PP_{\defin,*}(B/A,G(B))$. 
    
    In addition, the set of $A$-isomorphism classes of $A$-definable PHS for $G$ whose representatives all contain a $B$-point form a sub-pointed set of $\PP_{\defin}(M/A,G(M))$ which we denote $\PP_{\defin}(B/A,G(B))$. 
\end{definition}

Note that the identity of $G$ is an $A$-point, so contained in $B$. So the isomorphism class containing $G$ is in $\PP_{\defin}(B/A,G(B))$. Note also that the isomorphism class of $(G,e)$ is in $\PP_{\defin,*}(B/A,G(B))$. Thus $\PP_{\defin,*}(B/A,G(B))$ and $\PP_{\defin}(B/A,G(B))$ are nonempty.

\begin{remark}\label{Bpts of PHS}
    As $B$ is required to be $\dcl$-closed, if $X$ is any $A$-definable right PHS for $G$ with a $B$-point, then restricting the action of $G$ on $X$ to $B$-points, we see that $X(B)$ is an abstract right PHS for $G(B)$ equivariant with respect to the left action of $\aut(B/A)$.
\end{remark}

We now refine Proposition \ref{PHS_cocycle_correspondence_aut(M/A)} to describe the setting of normal extensions of parameters.

\begin{proposition}\label{CocyclePHScorrespB}
    There is a natural isomorphism $$\PP_{\defin}(B/A,G(B)) \cong \HH^1_{\defin}(B/A,G(B))$$ between the set of cohomology classes of definable cocycles from $\aut(B/A)$ to $G(B)$ and the pointed set of isomorphism classes of $A$-definable PHS $X$ for $G$ which contain a $B$-point.
\end{proposition}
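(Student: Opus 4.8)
The plan is to deduce the refined correspondence from Pillay's isomorphism $\Theta\colon \PP_{\defin}(M/A,G(M)) \cong \HH^1_{\defin}(M/A,G(M))$ of Proposition \ref{PHS_cocycle_correspondence_aut(M/A)}, combined with the injection $\pi^1$ of Lemma \ref{ZZ inj}. By construction $\PP_{\defin}(B/A,G(B))$ is a sub-pointed set of $\PP_{\defin}(M/A,G(M))$, and by Lemma \ref{ZZ inj} the map $\pi^1$ identifies $\HH^1_{\defin}(B/A,G(B))$ with the set of classes in $\HH^1_{\defin}(M/A,G(M))$ that admit a representative cocycle whose definability parameter lies in $B$. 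It therefore suffices to show that $\Theta$ carries $\PP_{\defin}(B/A,G(B))$ bijectively onto this image of $\pi^1$; the desired natural isomorphism is then $(\pi^1)^{-1}\circ \Theta$ restricted to $\PP_{\defin}(B/A,G(B))$, with basepoint preservation and naturality in $G$ inherited from $\Theta$ and $\pi^1$.

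Recall that $\Theta$ sends the class of a PHS $X$ to the class of the cocycle $\varphi_p$ determined by $\sigma(p) = p\cdot\varphi_p(\sigma)$ for a chosen $M$-point $p$, where $\varphi_p(\sigma) = d(p,\sigma(p))$ for the $A$-definable division function $d$ of the PHS, and that replacing $p$ by $p\cdot c$ yields the cohomologous cocycle $\sigma \mapsto c^{-1}\varphi_p(\sigma)\sigma(c)$. For the forward inclusion, I would take a class in $\PP_{\defin}(B/A,G(B))$, choose a representative $X$ together with a $B$-point $p \in X(B)$ (every representative has one, by the remarks preceding the proposition), and observe that the tuple $p$ itself serves as a definability parameter lying in $B$. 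Hence $\Theta([X]) = [\varphi_p]$ lies in the image of $\pi^1$ by Lemma \ref{ZZ inj}.

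The reverse inclusion is the main point. Suppose $\Theta([X])$ lies in the image of $\pi^1$, so it has a representative $\psi$ with definability parameter in $B$. By Lemma \ref{ZZ inj}, $\psi$ factors through $\pi$, so $\psi(\sigma) = e$ for every $\sigma \in \aut(M/B)$. Fix any $M$-point $p$ of $X$; then $\varphi_p$ is cohomologous to $\psi$, say $\varphi_p(\sigma) = b^{-1}\psi(\sigma)\sigma(b)$ for some $b \in G(M)$, whence $\varphi_p(\sigma) = b^{-1}\sigma(b)$ for all $\sigma \in \aut(M/B)$. Setting $q = p\cdot b^{-1}$ and computing $\sigma(q) = \sigma(p)\sigma(b)^{-1} = p\,\varphi_p(\sigma)\,\sigma(b)^{-1} = p\cdot b^{-1} = q$ for all $\sigma \in \aut(M/B)$, one sees that $q$ is fixed by $\aut(M/B)$. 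Since $M$ is atomic and strongly $\omega$-homogeneous over $B$ and $B = \dcl(B)$, the fixed set of $\aut(M/B)$ acting on $X$ is exactly $X(B)$; thus $q \in X(B)$, so $X$ has a $B$-point and $[X] \in \PP_{\defin}(B/A,G(B))$.

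The hard part is exactly this reverse inclusion: extracting a genuine $B$-point from the purely cohomological hypothesis. The crucial move is to produce the candidate point $q = p\cdot b^{-1}$ from the cohomology witness $b$ and then invoke the identification of the $\aut(M/B)$-fixed points of $X$ with $X(B)$, which is where atomicity and strong $\omega$-homogeneity of $M$ over $B$, together with $\dcl$-closedness of $B$, are essential. Once both inclusions are established, injectivity of the resulting map follows from injectivity of $\pi^1$ (Lemma \ref{ZZ inj}) together with the bijectivity of $\Theta$, while surjectivity onto $\HH^1_{\defin}(B/A,G(B))$ is immediate from the two inclusions.
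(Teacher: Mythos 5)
Your proposal is correct, and its overall skeleton matches the paper's: both reduce the statement to Proposition \ref{PHS_cocycle_correspondence_aut(M/A)} together with Lemma \ref{ZZ inj}, and both handle the forward direction by noting that a $B$-point $p$ of $X$ itself serves as a definability parameter in $B$ for the associated cocycle (this is Lemma \ref{pPHS to cocycle}). Where you genuinely diverge is in the reverse direction. The paper routes through Lemma \ref{cocycle to pPHS}: from a cocycle with parameter in $B$ it builds the explicit quotient PHS $(Z\times G)/E$ whose basepoint $(a,e)/E$ lies in $B$, and then transfers the $B$-point to $X$ via the $A$-definable $G$-equivariant isomorphism, using the earlier observation that having a $B$-point is invariant under such isomorphisms when $B=\dcl(B)$. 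You instead never leave $X$: starting from an arbitrary $p\in X(M)$ and the coboundary witness $b$ relating $\varphi_p$ to a cocycle killed on $\aut(M/B)$, you twist to $q=p\cdot b^{-1}$, verify $\sigma(q)=q$ for all $\sigma\in\aut(M/B)$, and conclude $q\in X(B)$ from the identification of the $\aut(M/B)$-fixed points with $\dcl(B)=B$. Your route is more self-contained and computational (it bypasses the equivalence relation $E$ and elimination of imaginaries at this step, though it leans a bit harder on the fact that the fixed set of $\aut(M/B)$ is exactly $\dcl(B)$, which requires atomicity and strong $\omega$-homogeneity of $M$ over $B$); the paper's route reuses machinery (Lemmas \ref{pPHS to cocycle} and \ref{cocycle to pPHS}) that it needs anyway in the proof of the main theorem of Section 4, where the same ``re-choose the definability parameter via the basepoint of the constructed PHS'' device reappears. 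Both arguments are sound, and in fact the point $q$ you produce is exactly the image of the basepoint $(a,e)/E$ under the paper's isomorphism, so the two proofs are computing the same thing by different means.
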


The following lemmas trace the constructions (from \cite{pillay_1997}) of a cocycle in $\ZZ^{1}_{\defin}(M/A,G(M))$ from a pointed PHS in $\PP_{\defin,*}(M/A,G(M))$ and vice versa. We check that via these constructions the cocycles in the image of $\ZZ^{1}_{\defin}(B/A,G(B))$ under the map of Lemma \ref{ZZ inj} corresponds to $\PP_{\defin,*}(B/A,G(B))$, that is, cocycles whose definability parameters come from $B$ correspond exactly to pointed PHSs which have a $B$-point as their basepoint.

\begin{lemma}\label{pPHS to cocycle}
    An $A$-definable pointed PHS $(X,p)$ for $G$ with $p \in X(B)$ gives rise to a cocycle $\varphi:\aut(M/A) \to G(M)$ which has the $B$-point $p \in X(B)$ as its definability parameter.
\end{lemma}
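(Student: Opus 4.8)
The goal is to construct, from a pointed PHS $(X,p)$ with $p \in X(B)$, a cocycle whose definability parameter is the $B$-point $p$ itself. The plan is to follow the standard construction from \cite{pillay_1997}: since $X$ is a PHS for $G$ and $p \in X(M)$, for each $\sigma \in \aut(M/A)$ both $p$ and $\sigma(p)$ lie in $X$, so by the defining property of a PHS there is a unique element $\varphi(\sigma) \in G(M)$ with $\rho(p, \varphi(\sigma)) = \sigma(p)$. This defines a function $\varphi : \aut(M/A) \to G(M)$, and I would take this to be the candidate cocycle.

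First I would verify the cocycle condition. Writing the action as $p \cdot g$ for $\rho(p,g)$, we have $p \cdot \varphi(\sigma\tau) = (\sigma\tau)(p)$. On the other hand, applying $\sigma$ to the equation $p \cdot \varphi(\tau) = \tau(p)$ and using that $\sigma$ commutes with the $A$-definable action (as $\sigma \in \aut(M/A)$ and $\rho$ is $A$-definable) gives $\sigma(p) \cdot \sigma(\varphi(\tau)) = \sigma(\tau(p)) = (\sigma\tau)(p)$. Substituting $\sigma(p) = p \cdot \varphi(\sigma)$ yields $p \cdot \varphi(\sigma)\sigma(\varphi(\tau)) = (\sigma\tau)(p)$, and uniqueness in the PHS axiom forces $\varphi(\sigma\tau) = \varphi(\sigma)\sigma(\varphi(\tau))$.

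Next I would establish the definability condition with $p$ as the parameter. Since $X$, $G$, and $\rho$ are all $A$-definable and the element $z$ with $\rho(x,z)=y$ is unique, the map $(x,y) \mapsto z$ sending a pair of points of $X$ to the unique group element carrying one to the other is itself $A$-definable; call it $h(\bar x, \bar y)$. Then by construction $\varphi(\sigma) = h(p, \sigma(p))$ for all $\sigma$, which is exactly the definability condition of Definition \ref{def coycle} with tuple $\bar a = p$. Because $p \in X(B)$, the definability parameter indeed comes from $B$, as required.

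The main point to be careful about — though it is not so much an obstacle as a place to invoke earlier results — is checking that $\varphi$ actually lands in $G(M)$ and respects the structure we need, rather than in some larger imaginary sort; this is handled by elimination of imaginaries together with the definability of the division map $h$. The genuinely new content beyond \cite{pillay_1997} is simply the observation that choosing the basepoint $p$ to be a $B$-point makes the canonical definability parameter lie in $B$, which is immediate from the construction above and sets up the correspondence with $\ZZ^1_{\defin}(B/A,G(B))$ via Lemma \ref{ZZ inj}.
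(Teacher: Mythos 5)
Your proposal is correct and follows essentially the same route as the paper: define $\varphi(\sigma)$ as the unique group element carrying $p$ to $\sigma(p)$, check the cocycle identity using that $\sigma$ commutes with the $A$-definable action, and witness definability by the $A$-definable ``division'' function extracted from the graph of the action, with the $B$-point $p$ as parameter. Your write-up just spells out the cocycle verification that the paper dismisses as ``easily checked.''
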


\begin{proof}
    As $p \in X(B)$, $\sigma(p) \in X(B)$. We define $\varphi$ by sending $\sigma \in \aut(B/A)$ to the unique $g_\sigma \in G(B)$ such that $\sigma(p) = pg_\sigma$. The cocycle condition is easily checked. Let $\Gamma(\mu)(\bar{x},\bar{z},\bar{y})$ define the graph of the definable action $\mu:X \times G \to X$. Let $h(\bar{x},\bar{y}) = \bar{z}$ be the definable function whose graph is $\Gamma(\mu)(\bar{x},\bar{z},\bar{y})$. Then $h(\bar{x},\bar{y})$ together with the parameter $p$ witnesses the definability condition.
\end{proof}

Let $\varphi:\aut(M/A) \to G(M)$ be a definable cocycle witnessed by an $A$-definable function $h$ and definability parameter $\bar{a}$ from $B$. Note that as $B \subseteq M$ and $M$ is atomic over $A$, the type $\tp(\bar{a}/A)$ is isolated and is then given by an $A$-definable set $Z$ defined by a formula $\theta(\bar{x})$. The following facts are from \cite{pillay_1997}.   

\begin{fact}\label{cocycle consequence for h}
     Let $\bar{b}$ and $\bar{c}$ be tuples from $M$ having the same type as $\bar{a}$ over $A$. Then $h(\bar{a},\bar{c}) = h(\bar{a},\bar{b})h(\bar{b},\bar{c})$. 
\end{fact}

\begin{fact}
    Let $(\bar{a}_1,g_1)$ and $(\bar{a}_2,g_2)$ be elements from $Y(M):= Z(M) \times G(M)$. The binary relation defined by $h(\bar{a}_1,\bar{a}_2)g_2 = g_1$ is an $A$-definable equivalence relation on this set. We will denote the equivalence relation by $E$.
\end{fact}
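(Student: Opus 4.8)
The plan is to check $A$-definability together with the three equivalence-relation axioms, of which definability is immediate: the condition $h(\aaa_1,\aaa_2)\,g_2 = g_1$, conjoined with the isolating formula $\theta$ that cuts out $Z$ and the $A$-definable graph of multiplication in $G$, assembles into a single formula over $A$. Before this I would record that the relation is well-posed on $Y(M)=Z(M)\times G(M)$: every $\bb\in Z(M)$ realizes $\tp(\aaa/A)$, since $\theta$ isolates that type, and for any $\aaa_1,\aaa_2\in Z(M)$ one has $h(\aaa_1,\aaa_2)\in G(M)$, so the equation $h(\aaa_1,\aaa_2)g_2 = g_1$ genuinely lives in the group. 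This last point follows by writing $\aaa_1=\sigma(\aaa)$ and $\aaa_2=\sigma\tau(\aaa)$ for suitable $\sigma,\tau\in\aut(M/A)$ (available by strong $\omega$-homogeneity) and computing $h(\sigma(\aaa),\sigma\tau(\aaa)) = \sigma(h(\aaa,\tau(\aaa))) = \sigma(\varphi(\tau))\in G(M)$, using that $h$ and $G$ are $A$-definable.

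The heart of the argument, and the one genuinely non-trivial step, is to upgrade Fact \ref{cocycle consequence for h} — which pins the first argument of $h$ to the fixed tuple $\aaa$ — to a fully homogeneous cocycle identity
\[ h(\aaa_1,\aaa_2)\,h(\aaa_2,\aaa_3) = h(\aaa_1,\aaa_3) \]
valid for all $\aaa_1,\aaa_2,\aaa_3$ realizing $\tp(\aaa/A)$. I would derive this purely algebraically from Fact \ref{cocycle consequence for h}. Setting $\bb=\cc$ there gives $h(\aaa,\bb)=h(\aaa,\bb)h(\bb,\bb)$, so $h(\bb,\bb)=e$ for every realization $\bb$ (using invertibility of $h(\aaa,\bb)\in G(M)$). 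Rearranging the Fact yields $h(\bb,\cc)=h(\aaa,\bb)^{-1}h(\aaa,\cc)$ for all such $\bb,\cc$; applying this to the pairs $(\aaa_1,\aaa_2)$, $(\aaa_2,\aaa_3)$ and multiplying, the middle factors $h(\aaa,\aaa_2)^{-1}h(\aaa,\aaa_2)$ cancel and the displayed identity drops out.

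With the homogeneous identity in hand, the three axioms are routine. Reflexivity is $h(\aaa_1,\aaa_1)=e$, whence $h(\aaa_1,\aaa_1)g_1=g_1$. For symmetry, taking $\aaa_3=\aaa_1$ in the identity gives $h(\aaa_1,\aaa_2)h(\aaa_2,\aaa_1)=e$, so $h(\aaa_2,\aaa_1)=h(\aaa_1,\aaa_2)^{-1}$, and $h(\aaa_1,\aaa_2)g_2=g_1$ rearranges to $h(\aaa_2,\aaa_1)g_1=g_2$. For transitivity, if $h(\aaa_1,\aaa_2)g_2=g_1$ and $h(\aaa_2,\aaa_3)g_3=g_2$, then $g_1=h(\aaa_1,\aaa_2)h(\aaa_2,\aaa_3)g_3=h(\aaa_1,\aaa_3)g_3$. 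Hence $E$ is an $A$-definable equivalence relation on $Y(M)$. I expect the only step demanding real care to be the passage from the single-slot Fact to the homogeneous identity, since both symmetry and transitivity rest on it, while reflexivity and definability are essentially formal.
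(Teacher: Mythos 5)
Your proof is correct. The paper itself offers no proof of this Fact --- it is imported from \cite{pillay_1997} --- so there is nothing to compare against directly; your argument is the standard one, and you correctly identify and supply the one nontrivial ingredient, namely upgrading Fact \ref{cocycle consequence for h} to the homogeneous identity $h(\aaa_1,\aaa_2)h(\aaa_2,\aaa_3)=h(\aaa_1,\aaa_3)$ via $h(\bb,\bb)=e$ and $h(\bb,\cc)=h(\aaa,\bb)^{-1}h(\aaa,\cc)$, together with the well-posedness observation (via strong $\omega$-homogeneity and $A$-definability of $h$) that $h$ takes values in $G(M)$ on pairs from $Z(M)$.
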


\begin{lemma}\label{cocycle to pPHS}
    The definable cocycle $\varphi$, with definability parameter contained in $B$, gives rise to a pointed PHS for $G$ whose basepoint is a $B$-point.
\end{lemma}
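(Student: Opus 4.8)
The plan is to construct, from the definable cocycle $\varphi$ with definability parameter $\aaa \in B$, a pointed PHS $(X,p)$ for $G$ whose basepoint $p$ is a $B$-point, following the construction in \cite{pillay_1997} but tracking carefully that the parameter's membership in $B$ forces the basepoint into $X(B)$. First I would let $X$ be the quotient $Y(M)/E = (Z(M) \times G(M))/E$ of the preceding Fact, equipped with its $A$-definable structure coming from elimination of imaginaries. The right action of $G$ on $X$ is induced by right multiplication on the second coordinate, $[(\bb,g)] \cdot g' = [(\bb, gg')]$; one checks this is well defined with respect to $E$ and exhibits $X$ as an $A$-definable right PHS for $G$, exactly as in \cite{pillay_1997}.

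The crucial point for this refinement is the choice and location of the basepoint. I would take $p = [(\aaa, e)]$, the $E$-class of the pair consisting of the definability parameter $\aaa$ itself together with the identity $e \in G$. Since $\aaa$ is a tuple from $B$ and $e$ is an $A$-point (hence a $B$-point), the pair $(\aaa,e)$ is a $B$-point of $Y$, and so its image $p$ under the $A$-definable (thus $B$-definable) quotient map is a $B$-point of $X$; here I use that $B = \dcl(B)$ so that the image of a $B$-point under a $B$-definable function is again in $B$. This is the step where the hypothesis $\aaa \in B$ is genuinely used, and it is what distinguishes this lemma from the unrefined construction of \cite{pillay_1997}.

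Finally I would verify that the pointed PHS $(X,p)$ so constructed recovers $\varphi$, i.e. that the cocycle associated to $(X,p)$ by the construction of Lemma \ref{pPHS to cocycle} equals $\varphi$. For $\sigma \in \aut(M/A)$ one computes $\sigma(p) = [(\sigma(\aaa), e)]$, and using the defining relation of $E$ together with Fact \ref{cocycle consequence for h} one identifies the unique $g_\sigma \in G$ with $\sigma(p) = p \cdot g_\sigma$ as $g_\sigma = h(\aaa, \sigma(\aaa)) = \varphi(\sigma)$. Concretely, $[(\aaa,e)] \cdot h(\aaa,\sigma(\aaa)) = [(\aaa, h(\aaa,\sigma(\aaa)))]$, and this is $E$-related to $[(\sigma(\aaa), e)]$ precisely because $h(\aaa,\sigma(\aaa)) \cdot e = h(\aaa, \sigma(\aaa))$, matching the defining equation of $E$.

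I expect the main obstacle to be bookkeeping rather than any deep difficulty: one must confirm that $E$ is indeed an equivalence relation (which is already granted by the preceding Fact and rests on Fact \ref{cocycle consequence for h}), that the induced action is well defined and free and transitive so that $X$ is genuinely a PHS, and above all that all the definability is over $A$ while the basepoint lands over $B$. The delicate bit is making sure that $p$ does not secretly depend on a choice outside $B$; since $\aaa \in B$ and the whole construction of $E$ and the quotient is $A$-definable, everything needed to name $p$ lives in $\dcl(\aaa) \subseteq B$, which closes the argument.
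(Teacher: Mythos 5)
Your proposal is correct and follows essentially the same route as the paper: the quotient $X=(Z\times G)/E$ with the right action on the $G$-coordinate, the basepoint $(\aaa,e)/E$, and the observation that this class lies in $\dcl^{eq}(\aaa)\subseteq\dcl^{eq}(B)=B$ by elimination of imaginaries. The extra verification that the associated cocycle recovers $\varphi$ is a harmless addition not demanded by the statement.
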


\begin{proof}
    By EI, the quotient of $Y = Z \times G$ by $E$, is again an $A$-definable set $X$. 

    It is clear that $G$ acts $A$-definably on the right on $Y$ by multiplication on the right on the $G$-coordinate. It is easy to see that this action preserves the classes of the equivalence relation and so descends to an action on $X$: if $g \in G$, $h(\bar{a}_1,\bar{a}_2)g_2 = g_1$ iff $h(\bar{a}_1,\bar{a}_2)g_2g = g_1g$. Similarly, because $G$ is a right torsor for itself, the action is regular: Let $(\bar{a}_1,g_1)$ and $(\bar{a}_2,g_2)$ be elements of $Y$. As $h(\bar{a}_1,\bar{a}_2)$, $g_1$ and $g_2$ are elements of $G$, there is a unique element $g \in G$ such that $h(\bar{a}_1,\bar{a}_2)g_2g = g_1$. That is, $g$ is the unique element such that $(\bar{a}_1,g_1)E(\bar{a}_2,g_2g)$. We have shown that $X$ is an $A$-definable PHS for $G$. 
    
    Finally, the specified basepoint of $X$ is the $E$-class of $(\bar{a},e)$ which lives in $\dcl^{eq}(B)$. Note by EI and assumptions, $\dcl^{eq}(B) = \dcl(B) = B$.
\end{proof}

Before proving Proposition \ref{CocyclePHScorrespB} we discuss the correspondence of Proposition \ref{PHS_cocycle_correspondence_aut(M/A)}. Cocycles $\varphi:\aut(M/A) \to G(M)$ correspond to isomorphism classes of pointed PHSs for $G$. Cohomology classes of cocycles correspond to isomorphism classes of PHSs for $G$, that is, cohomologous cocycles correspond to pointed PHSs which are isomorphic as PHSs but not as pointed PHSs: There is an $A$-definable $G$-equivariant bijection between the PHSs which is not neccesarily basepoint preserving.

\begin{proof}[Proof of Proposition \ref{CocyclePHScorrespB}]
    By Lemma 3.2, $\HH^1(B/A,G(B))$ injects into  the cohomology set $\HH^1(M/A,G(M))$. The image of this map is exactly the set of cohomology classes containing a cocycle with definability parameter contained in $B$. 
    
    By Lemmas 3.6 and 3.9, cocycles with definability parameters contained in $B$ correspond exactly with pointed PHSs for $G$ whose basepoint is a $B$-point. From the preceding discussion, cocycles which are cohomologous to cocycles with definability parameters from $B$ correspond to pointed PHSs which are isomorphic as PHSs (but not neccesarily as pointed PHS) to PHSs which have a $B$-point as basepoint, and so themselves must contain a $B$-point. 
    
    We have shown that $\PP_{\defin}(B/A,G(B))$, the pointed set of isomorphism classes of PHSs whose representatives contain a $B$-point, is isomorphic as a pointed set to $\HH^{1}_{\defin}(B/A,G(B))$. 
\end{proof}

We conclude this section by giving an alternative definition of a definable cocycle from $\aut(B/A)$ to $G(B)$ using notions introduced in \cite{pillay2024automorphism} and remark on the various definitions of cocycles appearing in the literature. 

Recall from \cite{pillay2024automorphism} that a set $Y \subseteq \aut(M/A)$ is called definable if there is an $A$-formula $\varphi(\bar{x},\bar{y})$ and a tuple $\bar{b}$ from $M$ such that $$Y = \{\sigma \in \aut(M/A) :  M \models \varphi(\bar{b},\sigma(\bar{b}))\}.$$ Furthermore, let $X$ be an $A$-definable set. We say that $\Lambda$, a subset of $\aut(M/A)\times X(M)$, is definable if there is an $A$-formula $\varphi(\bar{x},\bar{y},\bar{z})$ and a tuple $\bar{b}$ from $M$ such that $$\Lambda = \{(\sigma,\bar{c}) \in \aut(M/A)\times X(M
): M \models \varphi(\bar{b},\sigma(\bar{b}),\bar{c}))\}.$$

The following statement is Lemma 3.5 of \cite{pillay2024automorphism}. It gives either an alternative definition or a characterization of definable cocycles: Let $\varphi$ be a function from $\aut(M/A)$ to $G$. Then there is an $A$-definable function $h(\bar{x},\bar{y})$ and a tuple $\bar{b}$ from $M$ such that for all $\sigma \in \aut(M/A)$ $\varphi(\sigma) = h(\bar{b},\sigma(\bar{b}))$ if and only if the graph of $\varphi$ is definable in the above sense. 

We now give a similar statement in the setting of a normal extension.  In \cite{pillay2024automorphism}, the following definitions (or a slight variation of them depending on the version) are given: 

\begin{definition}
    A set $Y \subseteq \aut(B/A)$ is called definable if there is an $A$-formula $\varphi(\bar{x},\bar{y})$ and a tuple $\bar{b}$ from $B$ such that $$Y = \{\sigma \in \aut(B/A) :  M \models \varphi(\bar{b},\sigma(\bar{b}))\}.$$ Furthermore, let $X$ be an $A$-definable set. We say that $\Lambda$, a subset of $\aut(B/A)\times X(M)$, is definable if there is an $A$-formula $\varphi(\bar{x},\bar{y},\bar{z})$ and a tuple $\bar{b}$ from $B$ such that $$\Lambda = \{(\sigma,\bar{c}) \in \aut(B/A)\times X(M) : M \models \varphi(\bar{b},\sigma(\bar{b}),\bar{c}))\}.$$
\end{definition}

This gives an alternative characterization of the cocycles introduced in the beginning of this section:

\begin{lemma}
    Let $\varphi$ be a function from $\aut(B/A)$ to $G(B)$.  Then there is an $A$-definable function $h(\bar{x},\bar{y})$ and a tuple $\bar{b}$ from $B$ such that for all $\sigma \in \aut(B/A)$ $\varphi(\sigma) = h(\bar{b},\sigma(\bar{b}))$ if and only if the graph of $\varphi$ is definable in the sense of the previous definition.
\end{lemma}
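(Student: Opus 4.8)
The plan is to treat this as the exact $\aut(B/A)$-analogue of Lemma~3.5 of \cite{pillay2024automorphism}, proving the two implications directly while checking at each step that every parameter stays inside $B$. Write $X = G$ regarded as an $A$-definable set, so that the graph of $\varphi$ is $\{(\sigma,\varphi(\sigma)) : \sigma \in \aut(B/A)\} \subseteq \aut(B/A)\times X$, and recall that by the previous definition this graph is definable precisely when there is an $A$-formula $\psi(\bar x,\bar y,\bar z)$ and a tuple $\bar b$ from $B$ with $\{(\sigma,c) \in \aut(B/A)\times X : \models \psi(\bar b,\sigma(\bar b),c)\}$ equal to the graph of $\varphi$.

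For the forward direction, suppose $\varphi(\sigma) = h(\bar b,\sigma(\bar b))$ for an $A$-definable function $h$ and a tuple $\bar b$ from $B$. Let $\Gamma_h(\bar x,\bar y,\bar z)$ be the $A$-formula expressing $h(\bar x,\bar y)=\bar z$. Taking $\psi := \Gamma_h$ with the same parameter $\bar b$, one has $(\sigma,c)$ in the graph of $\varphi$ iff $c = h(\bar b,\sigma(\bar b))$ iff $\models \Gamma_h(\bar b,\sigma(\bar b),c)$, so the graph is definable in the required sense. This direction is immediate.

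The backward direction carries the content. Given $\psi$ and $\bar b \in B$ witnessing definability of the graph, I would manufacture an honest $A$-definable total function $h$ from $\psi$ by the usual ``unique witness, with a default value'' construction: set $h(\bar x,\bar y)$ equal to the unique $c \in G$ satisfying $\psi(\bar x,\bar y,c)$ when such a $c$ exists and is unique, and equal to $e$ otherwise. The set on which the uniqueness clause holds is $A$-definable (the condition $\exists! c\,\psi(\bar x,\bar y,c)$ is first-order), so this case split defines $h$ by an $A$-formula and $h$ is a total function into $G$. It then remains to evaluate $h$ at the relevant arguments: for each $\sigma \in \aut(B/A)$, the defining property of $\psi$ says that the fibre $\{c \in G : \models \psi(\bar b,\sigma(\bar b),c)\}$ is exactly the set of $c$ with $(\sigma,c)$ in the graph of $\varphi$, which is the singleton $\{\varphi(\sigma)\}$ because $\varphi$ is single-valued. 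Hence the uniqueness clause applies at $(\bar b,\sigma(\bar b))$ and $h(\bar b,\sigma(\bar b)) = \varphi(\sigma)$, so $(h,\bar b)$ is definability data for $\varphi$ with $\bar b$ from $B$, as required.

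The only point demanding care, and hence the main obstacle, is in this last direction: one must ensure that $h$ is a genuine $A$-definable \emph{total} function valued in $G$ rather than merely a definable relation, which is what the default-value construction secures, and one must invoke that $\varphi$ is a function to guarantee that the $\psi$-fibre over each $(\bar b,\sigma(\bar b))$ is a singleton so that the unique-witness branch is actually triggered. Since $\bar b \in B$ from the outset, no parameter bookkeeping beyond this is needed; in particular I would not need to pass through the projection $\pi \colon \aut(M/A) \to \aut(B/A)$, though one could alternatively deduce the statement from Lemma~3.5 of \cite{pillay2024automorphism} applied to $\varphi\circ\pi$ at the cost of tracking that the witnessing parameter can be taken in $B$.
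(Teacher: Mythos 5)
Your proof is correct. The forward direction is identical to the paper's (take the graph of $h$ as the defining formula, same parameter $\bar{b}$). In the backward direction you and the paper diverge on how to turn $\psi$ into an honest $A$-definable total function. You use the unique-witness-with-default-value construction: $h(\bar{x},\bar{y})$ is the unique $c\in G$ with $\psi(\bar{x},\bar{y},c)$ where that makes sense, and $e$ otherwise; then you only need that the $\psi$-fibre over each actual pair $(\bar{b},\sigma(\bar{b}))$ is the singleton $\{\varphi(\sigma)\}$, which is immediate from the graph of $\varphi$ being exactly the set cut out by $\psi$. The paper instead lets $\theta(\bar{x})$ isolate $\tp(\bar{b}/A)$ with set of realizations $Z$ and shows that $\theta(\bar{x})\wedge\theta(\bar{y})\wedge\psi(\bar{x},\bar{y},\bar{z})$ is already the graph of a function $h\colon Z^2\to G$; this requires existence and uniqueness of the witness over \emph{every} pair of realizations of the type, and that is exactly where the paper invokes strong $\omega$-homogeneity of $M$ and normality of $B$ (to realize any $\bar{b}'\models\tp(\bar{b}/A)$ as $\sigma(\bar{b})$ for some $\sigma\in\aut(B/A)$), together with atomicity to get $\theta$ in the first place. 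Your route is more elementary --- it uses none of atomicity, homogeneity, or normality --- and it fully suffices for the lemma as stated, since the definability condition only asks for \emph{some} $A$-definable $h$ with $\varphi(\sigma)=h(\bar{b},\sigma(\bar{b}))$. What the paper's construction buys is an $h$ defined and functional on all of $Z^2$, which is the form in which the definability data is exploited in the surrounding PHS constructions (the analogue of Facts 3.7--3.8 and Lemma 3.9).
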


We adapt the proof of Lemma 3.5 of \cite{pillay2024automorphism}.

\begin{proof}
    Suppose that for $\varphi:\aut(B/A) \to G(B)$, there is a tuple $\bar{b}$ from $B$ and definable function $h(\bar{x},\bar{y}) = \bar{z}$ such that for all $\sigma \in \aut(B/A)$, $\varphi(\sigma) = h(\bar{b},\sigma(\bar{b}))$. Let $\Gamma(h)(\bar{x},\bar{y},\bar{z})$ be the graph of $h$. Now, $(\sigma,g) \in \Gamma(\varphi)$, the graph of $\varphi$, iff $\varphi(\sigma)  = g$ iff $ M \models h(\bar{b},\sigma(\bar{b})) = g$. So the graph of $\varphi$ is definable via $\Gamma(h)(\bar{x},\bar{y},\bar{z})$ and parameter $\bar{b}$.

    Suppose now that $\varphi:\aut(B/A) \to G(B)$ is a function whose graph is definable via an $A$-formula $\psi(\bar{x},\bar{y}, \bar{z})$ and parameter $\bar{b}$ from $B$ so that $$\Gamma(\varphi) = \{(\sigma,g) \in \aut(B/A) \times G(M) :  M \models \psi(\bar{b},\sigma(\bar{b}),g)\}.$$ 
    
    Let $\theta(\bar{x})$ isolate the type $\tp(\bar{b}/A)$ and let $Z$ be the associated definable set of realizations of this type. We want to produce a definable function from $h:Z^2 \to G$ satisfying the definability condition of Definition \ref{def coycle} (ii). 
    
    Let $\bar{b}'$ realize the type of $\bar{b}$ over $A$. By the strong $\omega$-homogeneity of $M$ over $A$ and normality of $B$, there is an automorphism $\sigma \in \aut(B/A)$ with $\sigma(\bar{b}) = \bar{b}'$. Let $g = \varphi(\sigma)$. Then by the definability of the graph of $\varphi$,  $M \models \psi(\bar{b},\sigma(\bar{b}),g)$ and so $M \models \psi(\bar{b},\bar{b}',g)$. So for any realization $\bar{b}'$ of $\tp(\bar{b}/A)$ there is at least one $g \in G(B)$ with $M \models \psi(\bar{b},\bar{b'},g)$. Also, let $g,g' \in G(M)$ such that for some $\bar{b}'$ realizing $\tp(\bar{b}/A)$ we have $M \models \psi(\bar{b},\bar{b}',g)$ and $M \models \psi(\bar{b},\bar{b}',g')$. Again, via strong $\omega$-homogeneity and normality let $\sigma(\bar{b}) = \bar{b}'$. Then $M \models \psi(\bar{b},\sigma(\bar{b}),g)$ and $M \models \psi(\bar{b},\sigma(\bar{b}),g')$. By the definability of $\varphi$ we have $(\sigma,g) \in \Gamma(\varphi)$ and $(\sigma,g') \in \Gamma(\varphi)$ and since $\varphi$ is a function, $g = g'$. So there is exactly one $g \in G(M)$ such that $M \models \psi(\bar{b},\bar{b}',\bar{g})$ for any realization of $\tp(\bar{b}/A)$. We therefore have that $$M \models \forall \bar{y} \  \exists^{=1}  \bar{z} \ \theta(\bar{y})\wedge \psi(\bar{b},\bar{y},\bar{z}).$$ Also as $\theta(\bar{x})$ isolates the type of $\bar{b}$, we have $$M \models \forall \bar{x}  \ \forall \bar{y}  \ \exists^{=1} \bar{z}  \ \theta(\bar{y})\wedge\theta(\bar{x}) \wedge \psi(\bar{x},\bar{y},\bar{z}).$$ So $\theta(\bar{y})\wedge\theta(\bar{x}) \wedge \psi(\bar{x},\bar{y},\bar{z})$ is the graph of the desired definable function $h:Z^2 \to G$ with parameter $\bar{b}$ from $B$ satisfying the definability condition of Definition \ref{def coycle} (ii).
\end{proof}

\begin{remark}
    Concerning the various definitions of cocycles:

    \begin{enumerate}
        \item In the algebraic setting of \cite{serre1979galois}, cocycles are defined to be continuous. If $\aut(M/A)$ is given the usual structure of a topological group where a neighborhood base of the identity is the set of stabilizers of finite tuples from $M$, and $G(M)$ is given the discrete topology, then continuity of a cocycle $\varphi$ is equivalent the cocycle condition together with the data of an $\aut(M/A)$-invariant function $h(\bar{x},\bar{y})$ and a tuple $\bar{a}$ from $M$ such that for any $\sigma \in \aut(M/A)$, $\varphi(\sigma) = h(\bar{a},\sigma(\bar{a}))$. This observation is contained implicitly in Lemma 2.6 of \cite{pillay_1997}. This gives some motivation for the definition of a definable cocycle.
        
        In $ACF_0$, for $M = \acl(A)$ the function $h(\bar{x},\bar{y})$ is $\aut(M/A)$-invariant if and only if it is $A$-definable so ``invariant" and definable cocycles coincide. That is, at the level of cocycles, $Z^{1}_{\defin}$ in the setting of $ACF_0$ and $Z^1$ coincide.

        \item Kolchin's constrained cohomology is defined in terms of what he calls constrained cocycles, satisfy a generalization of the continuity condition. He shows that these are closely connected to another set of cocycles called differential rational cocycles whose definition is closer to the definability data definition. The key point, see Remark 2.8 (ii) of \cite{pillay_1997}, is that despite the fact that the set of constrained cocycles defined by Kolchin, $Z^1_{\Delta}$, is a priori slightly different than the set of definable cocycles specialized to the theory $DCF_0$, $Z^1_{\defin}$, the resulting sets of cohomology classes both classify differential algebraic PHSs and so $H^1_{\defin}$ for the theory $DCF_0$ and $H^1_{\Delta}$ are equivalent i.e., isomorphic as pointed sets. 
        
        \item We also remark that Kolchin suppresses the set of rational points on the coefficients in his notation, writing $H^1_\Delta(L/K,G)$ instead of $H^1_\Delta(L/K,G(L))$ for example.
    \end{enumerate}
    
\end{remark}

\section{The short exact sequence in definable Galois cohomology}

In this section we show that a short exact sequence of automorphism groups arising from a normal intermediate extension of parameters induces a short exact sequence in definable Galois cohomology.

We will retain the same assumptions on $M$, $A$, $T$, and $G$ from the previous sections. Our statement here is slightly more general than that of the introduction; we generalize $M$ in the earlier statement to an additional intermediate normal extension $C$.

Let $A \subseteq B \subseteq C \subseteq M$ with $B$ and $C$ normal in $M$, $\dcl^{eq}$-closed, such that every elementary permutation of either $B$ or $C$ fixing $A$ lifts to an automorphism of $M$ and such that $M$ is atomic and strongly $\omega$-homogeneous over both $B$ and $C$.

\begin{definition}
    Let $\sigma \in \aut(C/A)$. Let $\varphi \in \ZZ^{1}_{\defin}(C/B,G(C))$ be a definable cocycle. Let $(P,p)$ be a pointed definable PHS representing a class in $\mathcal{P}_{\defin,*}(C/B,G(B))$. We define the transform of $\varphi$ by $\sigma$, denoted $\sigma(\varphi)$, to be the map $$\sigma(\varphi)(\tau) := \sigma(\varphi(\sigma^{-1}\tau\sigma))$$ for $\tau \in \aut(B/C)$. We define the the transform of $(P,p)$ by $\sigma$, denoted $\sigma(P,p)$ to be the definable pointed PHS $(\sigma(P),\sigma(p))$.
\end{definition}

\begin{lemma}
    The transform induces a well defined action of $\aut(B/A)$ on both $\HH^1_{\defin}(C/B,G(C))$ and $\mathcal{P}_{\defin}(C/B,G(C))$ such that the isomorphism between them is equivariant with respect to this action.
\end{lemma}

\begin{proof}
    Let $(P,p)$ represent an isomorphism class in $\mathcal{P}_{\defin,*}(C/B,G(B))$.  Let $\sigma \in \aut(C/A)$. By the normality of $B$, $\sigma(P)$ is still a $B$-definable PHS for $G$ containing a $B$-point $\sigma(p)$.

    Let $(Q,q)$ be $B$-definably isomorphic as a pointed PHS to $(P,p)$ via $\Phi$. Then again by normality, $\sigma(\Phi)$ is a $B$-definable PHS isomorphism between $(\sigma(Q),\sigma(q))$ and $(\sigma(P),\sigma(p))$ so the transform is well defined on $\mathcal{P}_{\defin,*}(C/B,G(B))$. 

    Let $(P,p)$ correspond to a definable cocycle $\varphi \in \ZZ^{1}_{\defin}(C/B,G(C))$ via Lemma \ref{pPHS to cocycle}. We show that the transform of the cocycle $\varphi$ by $\sigma$ is the cocycle corresponding to the transform of $(P,p)$. Let $\tau \in \aut(C/B)$. Recall that $\sigma(p)$ is the base point of $\sigma(P)$, so applying $\tau$ we obtain \begin{align*}\tau(\sigma(p)) &= \sigma(\sigma^{-1}(\tau(\sigma(p))))\\
    &= \sigma(\sigma^{-1}\tau\sigma(p))\\
    &= \sigma(p\varphi(\sigma^{-1}\tau\sigma))\\
    &= \sigma(p)\sigma(\varphi(\sigma^{-1}\tau\sigma))\\
    &= \sigma(p)\sigma(\varphi)(\tau).
    \end{align*}
so the cocycle corresponding to $\sigma(P,p)$ is exactly the transform $\sigma(\varphi)$. It is also then clear that the transform by $\sigma \in \aut(C/A)$ depends only on its action on $B$ and therefore descends to an action of $\aut(B/A)$.

A short computation shows that if $\varphi$ and $\psi$ in $Z^{1}_{\defin}(C/B,G(C))$ are cohomologous via $g \in G(C)$, then $\sigma(\varphi)$ and $\sigma(\psi)$ in $Z^{1}_{\defin}(C/B,G(C))$ are cohomologous via $\sigma(g) \in G(C)$: Let $\varphi(\tau) = g^{-1}\psi(\tau)\tau(g)$. Then \begin{align*}
    \sigma(\varphi)(\tau) &= \sigma(\varphi(\sigma^{-1}\tau\sigma))\\&= \sigma(g^{-1}\psi(\sigma^{-1}\tau\sigma)\sigma^{-1}\tau\sigma(g)))\\
    &= \sigma(g^{-1})\sigma(\psi(\sigma^{-1}\tau\sigma)\tau\sigma(g)\\
    &= \sigma(g^{-1})\sigma(\psi)(\tau)\tau(\sigma(g)).
\end{align*}  

So $\aut(B/A)$ acts via the transform on $\HH^1_{\defin}(C/B,G(C))$ and $\mathcal{P}_{\defin}(C/B,G(C))$ equivariantly with respect to the isomorphism between them.     
\end{proof}

\begin{theorem}
    The short exact sequence $$1 \to \aut(C/B) \to \aut(C/A) \to \aut(B/A) \to 1$$ induces a short exact sequence in definable Galois cohomology $$1 \to \HH^1_{\defin}(B/A,G(B)) \to \HH^1_{\defin}(C/A,G(C)) \to \HH^1_{\defin}(C/B,G(C))^{\aut(B/A)}.$$
\end{theorem}

\begin{proof}
Let $\varphi$ be a definable cocycle from $\aut(B/A)$ to $G(B)$ with definability data $h$ and $a$. Let $\pi:\aut(C/A) \to \aut(B/A)$ be the quotient map. As in the proof of Lemma \ref{ZZ inj}, we define a map from $Z^1_{\defin}(B/A,G(B))$ to $Z^1_{\defin}(C/A,G(C))$ by sending $\varphi$ to $\varphi \circ \pi$. Now $\varphi \circ \pi$ is still a cocycle witnessed by the same definability data. Since $\pi$ is surjective, this map on cocycles is injective. Let $\varphi_1$ and $\varphi_2$ in  $Z^1_{\defin}(B/A,G(B))$ be cohomologous as witnessed by $g \in G(B)$. As $G(B) \subseteq G(C)$, the same element $g$ witnesses that $\varphi_1 \circ \pi$ and $\varphi_2 \circ \pi$ are cohomologous. So the map on cocycles descends to a well defined map on cohomology which we call $\pi^1$.\\

Let $\varphi_1$ and $\varphi_2$ be cocycles from $\aut(B/A)$ to $G(B)$ with definability data $h_1$, $\bar{a}_1$ and $h_2$, $\bar{a}_2$ respectively. Suppose that $\varphi_1 \circ \pi$ and $\varphi_2\circ\pi$ are cohomologous via $g \in G(C)$. So for all $\sigma \in \aut(C/A)$ and moreover for all $\sigma \in \aut(M/A)$, $$g^{-1}h_1(\bar{a}_1,\sigma(\bar{a}_1))\sigma(g) = h_2(\bar{a}_2,\sigma(\bar{a}_2)).$$ As $\bar{a}_1$ and $\bar{a}_2$ lie in $B$, when $\sigma \in \aut(M/B)$, $h_1(\bar{a}_1,\sigma(\bar{a}_1)) = e$ and $h_2(\bar{a}_2,\sigma(\bar{a}_2)) = e$, and then $g^{-1}\sigma(g) = e$. Thus $g$ is fixed under all automorphisms of $M$ fixing $B$. As $M$ is atomic and strongly $\omega$-homogeneous over $B$, we have $g \in dcl(B)$. As $dcl(B) = B$, we have that $g \in G(B)$. Then $g$ witnesses that $\varphi_1$ and $\varphi_2$ are cohomologous. Thus the induced map on cohomology, $\pi^1$, is injective.\\

Let $\iota:\aut(C/B) \to \aut(C/A)$ be the inclusion map. Precomposition by $\iota$ gives a map of cocycles from $Z^1_{\defin}(C/A,G(C)) \to Z^1_{\defin}(C/B,G(C))$ which induces a well defined map on cohomology $\iota^1$: The same definability data witnesses that precomposition gives a map of definable cocycles and the same element of $G(C)$ witnesses the cohomology relation on the restriction of the cocycle. Note that for cocycles in $\ZZ^1_{\defin}(C/A,G(C))$, precomposition by $\iota$ is equivalent to restriction to $\aut(C/B)$.  \\

We now show that the image of $\iota^1$ is contained in the fixed points of the action of $\aut(B/A)$ on $\HH^1_{\defin}(C/B,G(C))$ via the transform. Let $\sigma \in \aut(C/A)$ be a lift of some $\hat{\sigma} \in \aut(B/A)$. Let $\varphi \in \ZZ^1_{\defin}(C/A,G(C))$, we show that the transform $\sigma(\varphi\circ\iota)$ is cohomologous to $\varphi\circ\iota$. Let $\tau \in \aut(C/B)$. Then
\begin{align*}
    \sigma(\varphi\circ\iota)(\tau) &= \sigma(\varphi(\sigma^{-1}\tau\sigma))\\
    &= \sigma(\varphi(\sigma^{-1}))\sigma(\sigma^{-1}(\varphi(\tau)\tau(\varphi(\sigma))))\\
    &= \sigma(\varphi(\sigma^{-1})\varphi(\tau)\tau(\varphi(\sigma)))\\
    &= g^{-1}\varphi(\tau)\tau(g)
\end{align*}
where $g = \varphi(\sigma) \in G(C)$. So the cocycles $\sigma(\varphi\circ\iota)$ and $\varphi\circ\iota$ are cohomologous via $\varphi(\sigma)$. Hence the image of $\iota^1$ is contained in $\HH^1_{\defin}(C/B,G(C))^{\aut(B/A)}$. Note that we really need a class in the image: If $\varphi$ were an arbitrary cocycle in $\ZZ^1_{\defin}(C/B,G(B))$ then while it is true that $\sigma^{-1}\tau\sigma \in \aut(C/B)$, $\sigma$ is not necessarily in $\aut(C/B)$ and so we could not conclude that $\varphi(\sigma^{-1}\tau\sigma) = \varphi(\sigma^{-1})\sigma^{-1}(\varphi(\tau)\tau(\varphi(\sigma)))$ needed for the second equality of the computation.\\

Choose a cohomology class from $\HH^1_{\defin}(C/A,G(C))$ in the image of $\pi^1$. By definition this class has a representative cocycle $\varphi$ such that $\varphi = \psi \circ \pi$ where $\psi \in Z^1_{\defin}(B/A,G(B))$. Then $\varphi\circ \iota = \psi \circ \pi\circ \iota$ which is the constant cocycle. Since $\iota^1$ is a well defined map on cohomology, the image of the original class must be trivial i.e. cohomologous to the constant cocycle. So the image of $\pi^1$ is contained in the kernel of $\iota^1$. \\

We now check that the kernel of $\iota^1$ is contained in the image of $\pi^1$: Choose a cohomology class from $\HH^1_{\defin}(C/A,G(C))$ in the kernel of $\iota^1$. The first claim is that this class must contain a representative mapping to the constant cocycle: If $\varphi'$ is a representative of a class in the kernel of $\iota^1$ then there is a $g \in G(C)$ such that $\forall \sigma \in \aut(C/B)$, $\varphi'(\sigma) = g^{-1}\sigma(g)$. But then $g\varphi'\sigma(g^{-1})$ is another representative of the class of $\varphi'$ which maps to the constant cocycle.\\

Let $\varphi$ be a representative of our cohomology class in the kernel of $\iota^1$ such that $\varphi \circ \iota$ is the constant cocycle. The cocycle $\varphi$ is a map from $\aut(C/A)$ to $G(C)$ which sends the subgroup $\aut(C/B)$ to the identity of $G(C)$.\\

We now show that the definability data for $\varphi$ can be re-chosen so that the parameter comes from $B$: Let $h(\bar{x},\bar{y})$, $\bar{a}$ be the original definability data for $\varphi$. Let $Z$ be the $A$-definable set $\tp(\bar{a}/A)$; recall that $M$ is isolated over $A$. From $h(\bar{x},\bar{y})$ and $\bar{a}$, we can produce, as in Lemma \ref{cocycle to pPHS}, a pointed PHS for $G$ whose underlying definable set is $(Z\times G)/E$ where $E$ is the $A$-definable equivalence relation defined by $(\bar{a},g)E(\bar{a}',g')$ if $h(\bar{a},\bar{a}')g' = g$. The basepoint of this PHS is the $E$-class $(\bar{a},e)/E$ which furnishes new definability data for $\varphi$ as in Lemma \ref{pPHS to cocycle}.

Now, let $\sigma \in \aut(C/B)$. Then $\sigma((\bar{a},e)/E) = (\bar{a},e)/E$ as $h(\bar{a},\sigma(\bar{a}))e = e$. So $(\bar{a},e)/E$ is fixed pointwise by the action of $\aut(C/B)$, so $(\bar{a},e)/E \in B$.\\

We may now assume that $\varphi$ has definability parameter $\bar{a}$ coming from $B$. As in the proof of Lemma \ref{ZZ inj}, the definability condition $\varphi(\sigma) = h(\bar{a},\sigma(\bar{a}))$ shows $\varphi(\sigma) \in \dcl(\bar{a},\sigma(\bar{a}))$ and so by the normality of $B$ and the fact that $\dcl(B) = B$, the image of $\varphi$ is contained in $G(B)$. As the definability parameter is contained in $B$, $\varphi$ descends to a well defined map $\tilde{\varphi}$ from $\aut(B/A)$ to $G(B)$, and moreover $\tilde{\varphi}\circ\pi = \varphi$. We show that $\tilde{\varphi}$ still satisfies the cocycle condition: Let $\sigma,\tau \in \aut(C/A)$ and $\tilde{\sigma},\tilde{\tau} \in \aut(B/A)$ with $\pi(\sigma) = \tilde{\sigma}$ and $\pi(\tau) = \tilde{\tau}$ 
\begin{align*}
    \tilde{\varphi}(\tilde{\sigma}\tilde{\tau}) &= \varphi(\sigma\tau)\\
    &= \varphi(\sigma)\sigma(\varphi(\tau))\\
    &= \tilde{\varphi}(\tilde{\sigma})\tilde{\sigma}(\tilde{\varphi}(\tilde{\tau}))
\end{align*}
where the last equality holds because the image of $\varphi$ is contained in $B$.  The same definability data for $\varphi$ shows that $\tilde{\varphi}$ is a definable cocycle. The class of $\varphi$ is then the image of the class of $\tilde{\varphi}$ under $\pi^1$.

\end{proof}








\newpage
\bibliography{main.bib}{}
\bibliographystyle{plain}
\nocite{*}

\end{document}